\newtheorem{theorem}{Theorem}
\newtheorem{proposition}[theorem]{Proposition}
\newtheorem{corollary}[theorem]{Corollary}
\newtheorem{lemma}[theorem]{Lemma}
\newtheorem{definition}[theorem]{Definition}
\newtheorem{observation}[theorem]{Observation}
\newcommand{\bR}{{\bf R}}
\title[Monodromy and Dulac's Problem for PWAVF]{
Monodromy and Dulac's Problem for Piecewise analytical planar vector fields}
\author[C. Buzzi, J. Medrado and C. Pessoa]{}
  \subjclass[2010]{34C25, 34C05, 34C07}
   \keywords{Piecewise analytical planar vector fields, monodromic singular point, Poincar\'e map, Lyapunov coefficients}
\begin{document}
 \maketitle

\centerline{\scshape  Claudio A. Buzzi,  \; Claudio Pessoa}
\medskip

{\footnotesize \centerline{Universidade Estadual Paulista (UNESP), Instituto de Bioci\^encias Letras e Ci\^encias Exatas,} \centerline{R. Cristov\~ao Colombo, 2265, 15.054-000, S. J. Rio Preto, SP, Brasil }
\centerline{\email{claudio.buzzi@unesp.br} and \email{c.pessoa@unesp.br}}}

\medskip

\centerline{\scshape Jo\~ao C. Medrado}
\medskip

{\footnotesize \centerline{Universidade Federal de Goi\'as, Instituto de Matem\'atica e Estat\'\i stica, } \centerline{Campus
Samambaia,74001-970, Goi\^ania, GO, Brasil }
\centerline{\email{medrado@ufg.br}}}

\medskip

\bigskip

\begin{quote}{\normalfont\fontsize{8}{10}\selectfont
{\bfseries Abstract.}
Consider an analytical function $f:V\subset\bR^2\rightarrow\bR$ having $0$ as its regular value, a switching manifold $\Sigma=f^{-1}(0)$  and a piecewise analytical vector field $X=(X^+,X^-)$, i.e. $X^\pm$ are  analytical vector fields defined on $\Sigma^\pm=\{p\in V: \pm f(p)>0\}$. We characterize when the vector field $X$ has a monodromic singular point in $\Sigma$, called $\Sigma$-monodromic singular point. Moreover, under certain conditions, we show that a $\Sigma$-monodromic singular point of $X$ has a neighborhood free of limit cycles.
\par}
\end{quote}

\section{Introduction}

The finiteness problem for the number of limit cycles for analytical vector fields is known as Dulac's problem. In 1923, H. Dulac \cite{Dulac} proved that every real planar polynomial vector field has only a finite number of limit cycles. But, unfortunately, some errors have been found in his proof in the 1980s. For analytical vector fields on the plane, it was solved independently by J. \'Ecalle \cite{Ecalle} and Yu. S. Ilyashenko \cite{Ilyashenko}.

We deal with piecewise analytical vector fields $X$ (in short, PWAVF) defined by a pair $(X^+, X^-)$, where $X^+$ and $X^-$ are analytical vector fields on regions of the plane separated by an analytical curve $\Sigma$. The curve $\Sigma$ is called \textit{switching curve} which separates the plane in two regions $\Sigma^+$ and $\Sigma^-$ having defined on each region  the analytical vector fields $X^+$ and $X^-$, respectively.  In \cite {F}, Filippov defined the rules (revisited below)  for the transition of orbits crossing the switching analytical curve $\Sigma$.  He also describes when an orbit slides along $\Sigma$. This leads to an orbiting structure that is not always a flow on the plane obtained gluing the orbits on $\Sigma^+$  and $\Sigma^-$  along $\Sigma$. 

As far as we know, there is no systematic study of monodromic singular points of piecewise analytical planar vector fields belonging to $\Sigma$.  We call these points $\Sigma$-monodromic singular points. In fact, the concept of monodromy is not formalized for these types of vector fields. The few works that deal with this topic study the so-called ``pseudo focus" or ``sewed focus" (see \cite{CGP, F}). In the literature, four types of pseudo focus $p\in\Sigma$ of piecewise analytical planar vector fields $X=(X^+, X^-)$ are treated.
\begin{itemize}
	\item[(i)] {\it Focus-Focus type}: both systems $X^+$ and $X^-$ have a singular point at $p$ with eigenvalues $\lambda\in\mathbb{C} \setminus \mathbb{R}$ and their solutions turn around $p$ counterclockwise or clockwise sense.
	\item[(ii)] {\it Focus-Parabolic}  (resp. {\it Parabolic-Focus}) : $X^+$ (resp. $X^-$) has a singular point of focus type at p (i.e. $X^+$ has a singular point at $p$ with  eigenvalues $\lambda\in\mathbb{C} \setminus \mathbb{R}$) while the solutions of $X^-$ (resp. $X^+$) have a parabolic contact (i.e. a second order contact point) with $\Sigma$ at p, the solution by $p$ is locally contained in $\Sigma^+$ (resp. $\Sigma^-$). 
	\item[(iii)] {\it Parabolic-Parabolic}: the solutions of both systems have a parabolic contact at $p$ with $\Sigma$ in such a way that the flow of $X$ turns around p.
\end{itemize}

In \cite{CGP} the authors introduce techniques that make it possible to study the stability of the monodromic singular points described in the statements (i), (ii), and (iii). Moreover, they obtain general expressions for the first three Lyapunov constants for these points and generate limit cycles for some concrete examples. In 2021, in the work \cite{NS2021}, the authors consider the case Parabolic-Parabolic having folds with contact order greater or equal to two. We observe that the results about analyticity of the return maps in \cite{CGP} and  \cite{NS2021} are contained in our work. We remark that our approach is distinct of \cite{NS2021} and the results were obtained in an independent way.  Bifurcations of limit cycles from the pseudo focus,  defined above, were studied in several papers; see for example \cite{ BPT, F, FPT, HZ, MT, ZKB}. The Center-Focus Problem for piecewise linear systems is solved in \cite{HZ}.  Other papers that deal with the Center-Focus Problem for particular families of piecewise planar vector fields with a pseudo focus are \cite{BPT, P}.


We work with two problems. The first one is the characterization of the $\Sigma$-monodromic singular points to piecewise analytical planar vector fields. The last one is Dulac's problem restricted to piecewise analytical planar vector fields, i.e., the existence of a neighborhood of the $\Sigma$-monodromic singular point (whenever possible) ``free of limit cycles".  As far as we know, the unique work that studies Dulac's problem for piecewise analytical planar vector fields is \cite{AJMT}.  In this paper, the authors consider polycycles whose the singular points outside $\Sigma$ are hyperbolic saddles. The singular points on $\Sigma$  are either hyperbolic saddles or regular points or fold points of order $2$ from each side of $\Sigma$. They prove that this type of polycycle does not have limit cycles accumulating onto it.

Our main results are the following. Theorem~\ref{the:01} characterizes all $\Sigma$-monodromic singular points of piecewise analytical vector fields. In true, this result also holds with a similar proof for piecewise smooth vector fields. Considering piecewise analytical vector field, in the Sections~\ref{sec:PRM1}, we prove that if $p\in \Sigma$ is a monodromic singular point, then the Poincar\'e return map is well defined (Theorem \ref{teo:mon_ret_map}). In the Section~\ref{sec:PRM2}, we give conditions such that the Poincar\'e return map is analytic (Theorem \ref{fund_lemma}), and consequently, there is a neighborhood of the $\Sigma$-monodromic singular point free of limit cycles (Corollary~\ref{cor:PRMA} and Remark \ref{obnil}). We conclude this work by applying our results to classes of piecewise analytical vector fields with a $\Sigma$-monodromic singular point that is not one of the pseudo foci described in the statements (i), (ii), and (iii) above (Section~\ref{sec:App}). 

\section{Piecewise analytical vector fields}
Let $Y:\mathbb R^2\rightarrow \mathbb R^2$ be an analytical vector field and $p$ a singular point of $Y$.  We recall the definitions involving singular points of $Y$. The singular point $p$ is called {\it non-degenerate} if $0$ is not an eigenvalue of $DY(p)$. The singular point $p$ is called {\it hyperbolic} if the two eigenvalues of $DY(p)$ have real part different from $0$. The singular point $p$ is called {\it semi-hyperbolic} if exactly one eigenvalue of $DY(p)$ is equal to $0$. Hyperbolic and semi-hyperbolic singular points are also said to be {\it elementary} singular points. The singular point $p$ is called {\it nilpotent} if both eigenvalues of $DY(p)$ are equal to $0$ but $DY(p) \not\equiv 0$. The singular point $p$ is called {\it linearly zero} if $DY(p) \equiv 0$. It is called {\it center} if there is an open neighborhood consisting, besides the singular point, of periodic orbits. The singular point is said to be {\it linearly center} if the eigenvalues of $DY(p)$ are purely imaginary without being zero.

Let $p$ be a singular point of $Y$ and $\gamma=\{\gamma(t):t\in\mathbb R\}$ be an orbit of $Y$ that tends to $p$ when $t$ tends to $\infty$. In this case,  $\gamma$ is a {\it characteristic orbit} if there exists $\lim_{t\rightarrow \infty} \gamma (t)/|\gamma (t)|$, where $|\cdot|$ is the Euclidean norm. We may also consider orbits tending to $p$ as t tends to $-\infty$, but we can always change the sign of the parameter $t$ in the system associated with $Y$ and assume that $t$ tends to $\infty$. We have that $p$ is a {\it monodromic singular point} of $Y$ if there is no characteristic orbit associated with it.

Consider $f:\mathbb R^2 {\rightarrow} {\mathbb R}$  an analytical function having $0$ as a regular value and  denote $\Sigma=f^{-1}(0)$ and $\Sigma^\pm=\{p\in\mathbb R^2: \pm f(p)>0\}$. Let $X=(X^+,X^-)$ be a piecewise analytical vector field defined by
\[
 X(q) = \left\{ \begin{array}{c}             X^+(q)
\hspace{0.3cm}   \mbox{if}   \hspace{0.3cm}   f(q)   \geq    0,    \\
X^-(q)   \hspace{0.3cm}   \mbox{if}   \hspace{0.3cm}   f(q)   \leq
0,
\end{array}\right.
\]
where $X^\pm$ are analytical planar vector fields. We note that $X$ can be bi-valued at the points of $\Sigma$. Following
Filippov's terminology (in \cite {F}), we distinguish the
following arcs in $\Sigma$:
\begin{itemize}
\item[i)] Sewing  arc  $(SW)$:  characterized  by  $X^+f\cdot X^-f  >
0$ (see Figure~\ref{fig:01} (a)). 
\item[ii)] Escaping arc $(ES)$:
given by the inequalities $X^+f>0$ and $X^-f<0$ (see
Figure~\ref{fig:01} (b)). 
\item[iii)] Sliding arc $(SL)$: given by
the inequalities $X^+f<0$ and $X^-f>0$ (see Figure~\ref{fig:01} (c)).
\end{itemize}

As usual, here and in what follows, $Yf$ will denote the
derivative of the function $f$  in the direction of the vector
$Y$, i.e. $Yf=\langle \nabla f, Y\rangle$. Moreover, $Y^nf=Y(Y^{n-1}f)$.

\bigskip

\begin{figure}[ht]
	\begin{center}		
		\begin{overpic}[width=0.95\textwidth]{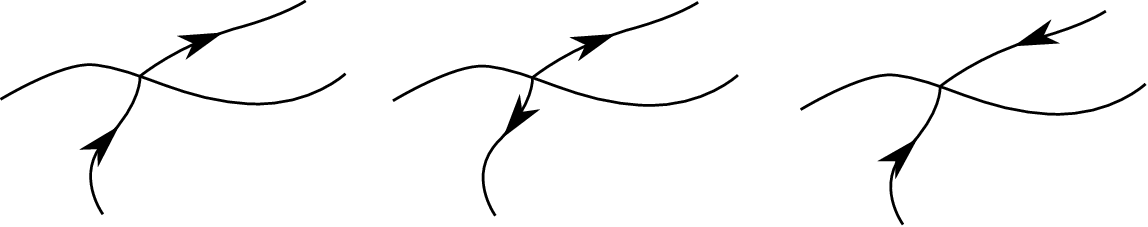}
			\put(12,-5) {$(a)$}
			\put(47,-5) {$(b)$}
			\put(83,-5){$(c)$}
			\put(12,10) {$p$}
			\put(47,10) {$p$}
			\put(82,9.5){$p$}
			\put(25,8){$\Sigma$}
			\put(60,8){$\Sigma$}
			\put(95,7){$\Sigma$}
		\end{overpic}
	\end{center}
	\vspace{0.7cm}
	\caption{Illustrating different types of $p\in \Sigma$: $(a)$ Sewing; $(b)$ Escaping; $(c)$ Sliding.}
	\label{fig:01}
\end{figure}  

On the arcs $ES$ and $SL$ we define the {\it Filippov vector
field} $F_{X}$ associated  to $X = (X^+,X^-)$, as follows: if $p \in
SL$ or $ES$, then $F_{X}(p)$  denotes  the vector tangent to $\Sigma$
in the cone spanned by $X^+(p)$  and $X^-(p)$. A  point  $p  \in  \Sigma$  is  called a {\it $\Sigma$-regular
point} of $X$ if $p\in SW$ or if $p\in SL$ or $p\in ES$ then $F_X(
p)\neq 0$. Otherwise, $p$ is a {\it $\Sigma$-singular point} of $X$.

Following \cite{ST}, a continuous closed curve $\gamma$ consisting of two regular
trajectories, one of $X^+$ and another of $X^-$, and two points
$\{p_1,p_2\}=\Sigma\cap\gamma$ is called a {\it $\Sigma$-closed crossing orbit } (or simply {\it $\Sigma$-closed orbit}) of $X$, if
$\{p_1,p_2\}$ are sewing points and $\gamma$ meets $\Sigma$
transversally in $\{p_1,p_2\}$.

\begin{definition}
\label{def_ChO}
Let $p$ be an isolated $\Sigma$-singular point of a piecewise analytical
vector field $X=(X^+,X^-)$. We say that $p$ has a $\Sigma$-{\it
characteristic orbit} if one of the following conditions  hold:
\begin{itemize}
\item[i)]  There exists a regular trajectory $\gamma$ of $X^+$ (resp.
$X^-$), with $p=\gamma(t_0)$ for some $t_0\in\mathbb{R}$, and $p\in
\overline{\gamma\cap \Sigma^+}$ (resp. $p\in
\overline{\gamma\cap \Sigma^-}$);

\item[ii)] There exists a regular trajectory $\gamma$ of $X^+$ (resp.
$X^-$) with $\lim_{t\rightarrow\pm\infty}\gamma(t)=p$, and there
exist a neighborhood $V$ of $p$ such that $\gamma \cap V\subset
V\cap\Sigma^+$ (resp. $\gamma \cap V\subset V\cap
\Sigma^-$);

\item[iii)] For all neighborhood $V$ of $p$ there exists $q\in V\cap\Sigma$ such that
$Xf(q)\cdot Yf(q) < 0$.
\end{itemize}
\end{definition}

If $p$ is an isolated $\Sigma$-singular point of $X$ and $X$ does not have
$\Sigma$-characteristic orbits associated with $p$ we call it $\Sigma$-{\it monodromic singular point}. 
\begin{definition}
\label{def_Mono}
A $\Sigma$-singular point $p$ of $X=(X^+,X^-)$ is a 
\begin{itemize}
\item[a)] {\it center} if there exist a
neighborhood $V$ of $p$ such that $V\setminus \{p\}$ is fulfilled of $\Sigma$-closed orbits of $X$;
\item[b)] {\it focus} if there exists a neighborhood $V$ of $p$ such that for all orbits $\gamma$ of $X$ by points in $V$  spirals toward or backward $p$;
\item[c)] {\it center-focus} if there exists a sequence of  \; $\Sigma$-closed orbits $\gamma_n$ of $X$, with $\gamma_{n+1}$ in the interior of $\gamma_n$ such that $\gamma_n\rightarrow p$ as $n\rightarrow \infty$ and such that every trajectory between $\gamma_n$ and $\gamma_{n+1}$ spirals toward $\gamma_n$ or $\gamma_{n+1}$ as $t$ increase or decrease. 
\end{itemize}
\end{definition}
By Definitions \ref{def_ChO} and \ref{def_Mono}, a $\Sigma$-singular point is a $\Sigma$-monodromic singular point if and only if it is a center or focus or center-focus.
 
A $\Sigma$-singular point $p$ is a \textit{fold$_{n^\pm}$} of $X^\pm$ if it is a fold  point of order $n^\pm$, i.e.,
\[
X^\pm(p)\neq 0, X^\pm f(p) =\cdots =(X^\pm)^{n^\pm -1}f({p})= 0 \mbox{ and } (X^\pm)^{n^\pm} f(p)\neq 0,
\]
where  $n^\pm$ are even. In this case we have that $X^\pm$ has a contact of order $n^\pm$ with $\Sigma$ at $p$ (see Figure~\ref{fig:02}). 

We say that $p$ is a {\it visible fold$_{n^\pm}$} for $X^\pm$ if $\pm (X^\pm)^{n^\pm}f(p)>0$. If $\pm (X^\pm)^{n^\pm}f(p)<0$ then we say that $p$ is an {\it invisible fold$_{n^\pm}$} for $X^\pm$.
\begin{figure}[ht]
	\begin{center}		
		\begin{overpic}[width=0.95\textwidth]{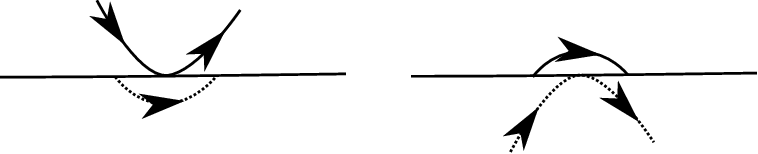}
			\put(20,-5) {$(a)$}
			\put(75,-5) {$(b)$}
			\put(21.5,12) {$p$}
			\put(75.5,7.5) {$p$}
			\put(40,7){$\Sigma$}
			\put(95,7){$\Sigma$}
		\end{overpic}
	\end{center}
	\vspace{0.7cm}
	\caption{Illustrating different types of folds: $(a)$ Visible fold$_{n^+}$ for $X^+$; $(b)$ Invisible fold$_{n^+}$ for $X^+$.}
	\label{fig:02}
\end{figure} 

\section{$\Sigma$-Monodromic singular points}

In this section, we give necessary and sufficient conditions to a
$\Sigma$-singular point be a $\Sigma$-monodromic singular point.

\begin{proposition}
	\label{pro:02} 
	Let $p$ be an isolated $\Sigma$-singular point of $X=(X^+,X^-)$. If
	$p$ is a $\Sigma$-monodromic singular point, then $X^+f(p)=X^-f(p)=0$ and
	there is a neighborhood $V$ of $p$ in $\Sigma$  with $X^+f\cdot X^-f\mid_V
	>0$ with the unique exception of $p$.
\end{proposition}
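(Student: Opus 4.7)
The plan is to prove both assertions by contradiction, using the three clauses of Definition~\ref{def_ChO} as tools for detecting a $\Sigma$-characteristic orbit and thereby violating monodromicity.

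First, for the equality $X^+f(p)=X^-f(p)=0$, I would argue that if $X^+f(p)\neq 0$ then in particular $X^+(p)\neq 0$ and $X^+$ is transverse to $\Sigma$ at $p$. The orbit $\gamma$ of $X^+$ through $p$ is then a regular trajectory with $p=\gamma(t_0)$, and transversality forces $\gamma(t)\in\Sigma^+$ for $t$ on one side of $t_0$, so that $p\in\overline{\gamma\cap\Sigma^+}$. This is exactly clause (i) of Definition~\ref{def_ChO}, yielding a $\Sigma$-characteristic orbit and contradicting monodromicity. The same argument applied to $X^-$ gives $X^-f(p)=0$.

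For the sign condition, the strategy is to suppose, for contradiction, that there is a sequence $q_n\in\Sigma\setminus\{p\}$ with $q_n\to p$ and $(X^+f\cdot X^-f)(q_n)\le 0$, and then pass to a subsequence on which the product is either strictly negative throughout or identically zero throughout. The strictly negative case is dispatched immediately by clause (iii) of Definition~\ref{def_ChO}: in every neighborhood of $p$ the product takes a strictly negative value, producing a $\Sigma$-characteristic orbit and a contradiction.

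The remaining case is the main technical obstacle and the only place where analyticity is genuinely required. After a further subsequence one may assume $X^+f(q_n)=0$ for all $n$. Parametrizing $\Sigma$ by an analytic arc near $p$ (available because $0$ is a regular value of $f$), the restriction $X^+f|_\Sigma$ becomes an analytic function of one real variable vanishing on a sequence accumulating at $p$, hence by the identity principle it vanishes on a whole neighborhood $W$ of $p$ in $\Sigma$. But then every $q\in W\setminus\{p\}$ satisfies $X^+f(q)=0$, so $q$ belongs to none of $SW$, $ES$, $SL$ and is itself a $\Sigma$-singular point, contradicting the isolation hypothesis on $p$. Promoting sequential vanishing to identical vanishing via one-variable analyticity is precisely where the analyticity of $X$ and $\Sigma$ is used, and the statement would fail in the merely smooth setting.
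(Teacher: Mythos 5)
Your proof is correct and, in its core, follows the same strategy as the paper: nonvanishing of $X^+f$ or $X^-f$ at $p$ forces transversality and hence a $\Sigma$-characteristic orbit via clause (i) of Definition~\ref{def_ChO}, while a sequence of points where the product is strictly negative triggers clause (iii). You are actually more careful than the paper on one point: the paper negates ``$X^+f\cdot X^-f>0$ on $V\setminus\{p\}$'' directly to ``$<0$ somewhere in every $V$,'' silently skipping the case where the product merely vanishes at points accumulating at $p$, which you treat explicitly. However, your treatment of that case is heavier than it needs to be. A point $q\in\Sigma$ with $(X^+f\cdot X^-f)(q)=0$ lies in none of $SW$, $ES$, $SL$ and is therefore, by definition, a $\Sigma$-singular point; so a \emph{single} such $q\neq p$ arbitrarily close to $p$ already contradicts the hypothesis that $p$ is an isolated $\Sigma$-singular point. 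There is no need to pass to a subsequence where $X^+f$ vanishes identically, nor to invoke the identity principle to spread the vanishing over a whole arc. Consequently your closing claim --- that this is where analyticity is ``genuinely required'' and that the statement would fail in the merely smooth setting --- is not right: the isolation hypothesis alone does the work, and indeed the paper remarks that its classification results carry over to piecewise smooth vector fields with essentially the same proofs.
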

\begin{proof}
	As $p$ is a $\Sigma$-singular point of $X=(X^+,X^-)$, we have that  $X^+f({p})\cdot X^-f({p})\leq 0$.  In the first case $X^+f({p})\cdot X^-f({p})< 0$, i.e. $p\in SL\cup ES$ and $F_X({p})=0$, there exist a neighborhood $V$ of $p$ such that $X^+f\cdot X^-f\mid_V
	<0$. Hence, by statement {\rm (iii)} of Definition \ref{def_ChO}, $p$ has a $\Sigma$-characteristic orbit. In the second case we have $X^+f({p})\cdot X^-f({p})= 0$. Note that if $X^+f({p})\neq 0$ (resp. $X^-f({p})\neq 0$), then the flow of $X^+$ (resp. $X^-$) is transversal to $\Sigma$ at $p$ and so, by statement {\rm (i)}, $p$ has a $\Sigma$-characteristic orbit. Therefore $X^+f(p)=X^-f(p)=0$. 
	
	Assume that does not exist a neighborhood $V$ of $p$ in $\Sigma$ such that  $X^+f\cdot X^-f\mid_{V\setminus\{p\}}
	>0$, i.e., for all neighborhood $V$ of $p$ in $\Sigma$ there exists $q\in V$ such that $X^+f({q})\cdot X^-f({q})< 0$. So, by statement {\rm (iii)} of Definition \ref{def_ChO}, $p$ has a $\Sigma$-characteristic orbit. 
\end{proof}

\begin{observation}
From Proposition \ref{pro:02} it is clear that if $p$ is a $\Sigma$-mo\-no\-dro\-mic singular point then the orientation of the orbits of vector fields $X^+$ and $X^-$ around the singularity agrees in such way that the trajectories of both vector fields can be concatenated at $\Sigma$ in suitable order.
\end{observation}

To state the next result, we need the following definition. Let $p\in \overline{K}\subset\mathbb R^2$ an isolated singular point of an analytical vector field $Y$.  We say that $Y$  has a {\it characteristic orbit in} $K$ associated with $p$ if there are a characteristic orbit $\gamma$ of $Y$ associated with $p$ and a neighborhood $V$ of $p$ such that $\gamma \cap V\subset V\cap K$. Otherwise, we say that $Y$ has no characteristic orbit in $K$. 
The following theorem classifies the $\Sigma$-monodromic singular
points of $X$.

\begin{theorem}
\label{the:01}
Let $X=(X^+,X^-)$ be a piecewise analytical vector field. We
suppose that $p$ is a $\Sigma$-singular point of $X$ such that
$X^+f(p)=X^-f(p)=0$ and $X^+f\cdot X^-f\mid_V >0$ in a $V$-neighborhood of $p\in \Sigma$ with the unique exception of $p$.
\begin{itemize}
\item[i)] If $X^+(p)\neq 0$ and $X^-(p)\neq 0$, then $p$ is a
$\Sigma$-monodromic singular point of $X$ if and only if it is a fold
of order $n^+$ of $X^+$ with $(X^+)^{n^+}f(p)<0$ and it is a fold of order
$n^-$ of $X^-$ with $(X^-)^{n^-}f(p)>0$.

\item[ii)] If $X^+(p)=0$ (resp. $X^-(p)=0$) and $X^-(p)\neq 0$
(resp. $X^+(p)\neq 0$), then $p$ is a $\Sigma$-monodromic singular
point of $X$ if and only if $X^+$ has no characteristic orbit in $\Sigma^+$
(resp. $X^-$  has no characteristic orbit in $\Sigma^-$) associated with $p$ and $p$ is a fold of order $n$ of $X^-$
(resp. $X^+$) with $(X^-)^nf(p)>0$ (resp. with
$(X^+)^nf(p)<0$).

\item[iii)] If $X^+(p)=X^-(p)=0$, then $p$ is a $\Sigma$-monodromic
singular point of $X$ if and only if $X^+$ has no characteristic orbit in $\Sigma^+$ and $X^-$ has no characteristic orbit in $\Sigma^-$ associated with $p$, respectively.
\end{itemize}
\end{theorem}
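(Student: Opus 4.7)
The plan is to deduce the theorem as a case-by-case consequence of Definition~\ref{def_ChO}, given the two standing hypotheses $X^+f(p)=X^-f(p)=0$ and $X^+f\cdot X^-f>0$ on a punctured neighborhood of $p$ in $\Sigma$. The second hypothesis immediately rules out alternative~(iii) of Definition~\ref{def_ChO}, so the only work is to decide, on each side of $\Sigma$ separately, whether alternatives~(i) or~(ii) can witness a $\Sigma$-characteristic orbit. Since the roles of $X^+$ and $X^-$ are symmetric, I will argue in detail only for $X^+$; the statement for $X^-$ follows by exchanging the sign of $f$.

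Suppose first that $X^+(p)\neq 0$. Then alternative~(ii) of Definition~\ref{def_ChO} is vacuous, since a regular point of a planar analytic vector field cannot be the $\omega$- or $\alpha$-limit of any of its orbits, so only alternative~(i) matters. I would Taylor-expand $t\mapsto f(\phi^+(t))$ along the orbit $\phi^+$ of $X^+$ through $p$ and use that its $k$-th derivative at $0$ equals $(X^+)^kf(p)$. Let $n^+\ge 2$ be the first index at which this derivative is non-zero; this index is finite, for otherwise analyticity forces $f\circ\phi^+\equiv 0$, so an arc of $\Sigma$ through $p$ is an orbit of $X^+$, producing a whole arc of zeros of $X^+f$ in $\Sigma$ and contradicting the positivity of $X^+f\cdot X^-f$ near $p$. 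If $n^+$ is odd, $f\circ\phi^+(t)$ changes sign at $t=0$ and a branch of the punctured orbit lies in $\Sigma^+$, realizing alternative~(i); if $n^+$ is even and $(X^+)^{n^+}f(p)>0$, then $f\circ\phi^+(t)>0$ for small $t\neq 0$ and the punctured orbit lies locally in $\Sigma^+$, again realizing alternative~(i). Hence avoiding $\Sigma$-characteristic orbits on the $X^+$ side forces $n^+$ to be even with $(X^+)^{n^+}f(p)<0$, i.e.\ the invisible-fold condition; the mirror analysis for $X^-$ yields $(X^-)^{n^-}f(p)>0$.

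Suppose now that $X^+(p)=0$. Then alternative~(i) of Definition~\ref{def_ChO} is vacuous because no regular trajectory of $X^+$ passes through $p$, so only alternative~(ii) is relevant. Since $p$ is an isolated singularity of the analytic field $X^+$, any orbit tending to $p$ while remaining in the open set $\Sigma^+$ necessarily has a characteristic direction (the pure-spiral alternative is excluded, because $\Sigma$ passes through $p$ and a spiralling orbit would cross $\Sigma$ infinitely often). Thus alternative~(ii) fails exactly when $X^+$ has no characteristic orbit in $\Sigma^+$ associated with $p$, in the sense of the definition stated just before Theorem~\ref{the:01}.

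Combining these observations on the two sides yields the three cases of the theorem: case~(i) applies the fold analysis of the second paragraph to both $X^+$ and $X^-$; case~(iii) applies the characteristic-orbit-in-$\Sigma^\pm$ criterion of the third paragraph to both; case~(ii) mixes the two. The main technical point that I expect to require some care is the exclusion of infinite contact order in the second paragraph, where I need both the analyticity of $X^\pm$ and $f$ and the punctured positivity $X^+f\cdot X^-f>0$ on $\Sigma$ to rule out the orbit of $X^+$ sitting inside $\Sigma$; the remaining parity and sign bookkeeping is then elementary.
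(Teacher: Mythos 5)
Your proposal is correct and follows the same route as the paper: reduce $\Sigma$-monodromy to the nonexistence of $\Sigma$-characteristic orbits and then test the three alternatives of Definition~\ref{def_ChO} side by side. In fact the paper's own proof consists of little more than the assertion that the stated conditions are equivalent to the absence of $\Sigma$-characteristic orbits, whereas you actually supply the supporting arguments (the Taylor expansion of $f\circ\varphi^{+}$ with the finite-contact-order and parity/sign analysis for the regular case, and the spiral-versus-characteristic-direction dichotomy for the singular case), so your write-up is a strictly more detailed version of the same argument.
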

\begin{proof}
As $p$ is a $\Sigma$-singular point of $X$ such that $X^+f(p)=X^-f(p)=0$
and $X^+f\cdot X^-f\mid_V >0$ in a $V$-neighborhood of $p$ in $\Sigma$ with the
unique exception of $p$, it follows that $p$ is an isolated
singular point of $X$. Now, $p$ is a $\Sigma$-monodromic singular point
of $X$ if and only if  it is isolated and $X$ has no
$\Sigma$-characteristic orbit associated with $p$. Therefore, the hypothesis and
conditions stated in each case ((i), (ii), and (iii)) are necessary
and sufficient to the nonexistence of $\Sigma$-characteristic orbits of $X$
associated with $p$. This finishes the proof.
\end{proof}

\section{Existence of Poincar\'e return map}
\label{sec:PRM1}

In the sequel, we  consider  $f:\mathbb R^2 {\rightarrow} {\mathbb R}$   an analytical function having $0$ as a regular value and  denote $\Sigma=f^{-1}(0)$ and $\Sigma^\pm=\{p\in\mathbb R^2:\pm f(p)>0\}$.  Also, we consider $p$ a $\Sigma$-singular point of $X=(X^+,X^-)$ such that the analytical vector field $X^+$ ($X^-$) does not have characteristic orbit in $\Sigma^+$ ($\Sigma^-$) associated with $p$. In this case we have the following situation: given $q^+\in\Sigma$ and an orbit $\varphi^+$ of $X^+$ such that $\varphi^+(0, q^+)=q^+$,  we can determine flight time $\tau^+$ for which $q^-=\varphi^+(\tau^+, q^+)\in \Sigma$ and $\varphi^+ \subset \Sigma^+$ for all time $t$ between $0$ and $\tau^+$. So, it makes sense the study of the application $\Pi^+: V \rightarrow W$ where $q^+\in V\subset\Sigma$ and  $\varphi^+(\tau^+, q^+) \in W \subset \Sigma$. We call $\Pi^+$ the \textit{half-Poincar\'e return map.}  In a similar way, we define $\Pi^-$ considering an orbit $\varphi^- $ of $X^-$ such that $\varphi^-(0, q^-)=q^-\in W\subset \Sigma$ and $\varphi^-(\tau^-, q^-) \in V \subset \Sigma$ with $\varphi^-\subset \Sigma^-$ for all time $t$ between $0$ and $\tau^-$. We also define the \textit{displacement function} given by $\Psi(q^-)=\Pi^+(\varphi^+(-\tau^+, q^-))-\Pi^-(q^-)$, or $\Psi=\Pi^+-(\Pi^-)^{-1}$. Observe that $(\Pi^-)^{-1}$ is the half-Poincar\'e return map $\Pi^-$ associated with vector field $-X^{-}$. See Figure~\ref{poincare}.

\begin{figure}[ht]
	\label{poincare}
	\begin{center}		
		\begin{overpic}{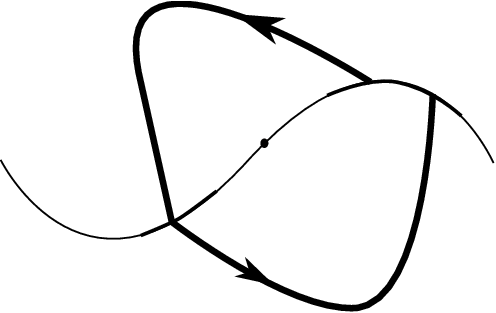}
			\put(73,42) {$q^+$}
			\put(5,19) {$\Pi^+(q^+)=q^-$}
			\put(77,48){$V$}
			\put(44,20) {$W$}
			\put(88,44){$\Pi^-(q^-)$}
			\put(80,0){$\varphi^-$}
			\put(24,60){$\varphi^+$}
			\put(2,30){$\Sigma$}
			\put(53,28){$p$}
		\end{overpic}
	\end{center}
	\caption{The Poincar\'e return map $\Pi(q^+)=\Pi^-(\Pi^+(q^+)$ where $q^+\in V$ and $ q^-\in W$ such that $q^-=\varphi^+(\tau^+,q^+)=\Pi^+(q^+)$ and the displacement function $\Psi(q^-)=\Pi^+(\varphi^+(-\tau^+, q^-))-\Pi^-(q^-)$.}
\end{figure}  
Now, we will study under which conditions the Poincar\'e return map $\Pi=\Pi^-\circ\Pi^+$ is well defined on $\Sigma$.

\begin{lemma}\label{prop5}
Let $Y$ be an analytical vector field in some open subset $U$ of  $\mathbb R^2$ 
and $\Sigma=f^{-1}(0)$, where $f:U\subset\mathbb R^2\rightarrow\mathbb R$ is analytic and $0$ is its regular value. The following statements hold.
\begin{itemize}
	\item[i)]  If $p\in\Sigma$ is an isolated singular point  and $Y$ does not have characteristic orbit in $\Sigma^+\cup\Sigma$ (resp. $\Sigma^-\cup\Sigma$) associated with $p$, then the half-Poincar\'e return map  is well defined in a neighborhood $V\cap(\Sigma^+\cup\Sigma)$ of $p$  (resp. $V\cap(\Sigma^-\cup\Sigma)$).
	\item [ii)] If $p\in\Sigma$ is an invisible (resp. visible) fold$_n$ of $Y,$ then the half-Poincar\'e return map  is well defined in a neighborhood $V\cap(\Sigma^+\cup\Sigma)$ of $p$ (resp. $V\cap(\Sigma^-\cup\Sigma)$).
\end{itemize}

\end{lemma}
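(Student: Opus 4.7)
\emph{Plan.} My plan is to treat part (i) by a compactness-plus-Poincar\'e--Bendixson argument combined with the classical analytic theory of isolated planar singularities, and part (ii) by an explicit flow-box computation.

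For part (i) I fix a small closed disk $\overline{D}$ about $p$ in which $p$ is the only zero of $Y$ and $\overline{D}\cap\Sigma$ is a single analytic arc through $p$, and argue by contradiction: assume a sequence $q_n\to p$ in $\Sigma$ whose positive orbits remain in $\overline{D}\cap\overline{\Sigma^+}$ and never return to $\Sigma$. Continuous dependence on initial conditions, together with $\varphi(t,p)\equiv p$, forces the first exit time $T_n$ from $\overline{D}$ to tend to $\infty$, so a standard limit argument produces a nonempty compact $Y$-invariant set $K\subset\overline{D}\cap\overline{\Sigma^+}$. Combining Poincar\'e--Bendixson with the classical dichotomy for isolated analytic singularities (Bendixson), the possibilities for $K$ boil down to the following four, all of which I can exclude: if an orbit in $K$ tends to $p$ and is characteristic, then it lies in $\Sigma^+\cup\Sigma$ and contradicts the hypothesis; if it is non-characteristic, then it must spiral and therefore meet the regular analytic arc $\Sigma$ infinitely often, contradicting its containment in $\overline{\Sigma^+}$; if $K$ contains a periodic orbit $\gamma\subset\overline{\Sigma^+}$, then $\gamma$ bounds an open region inside $\Sigma^+$ which by the Poincar\'e index theorem must contain a singular point of $Y$ of index $+1$, while the only candidate $p$ lies on $\Sigma$ and so outside the interior of $\gamma$; and if $K$ contains a graphic based at $p$, then this graphic is made of homoclinic orbits, each of which is characteristic and lies in $\Sigma^+\cup\Sigma$, again contradicting the hypothesis.

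For part (ii), $Y(p)\ne 0$ and the flow-box theorem give analytic coordinates near $p=0$ with $Y=\partial_x$. In these coordinates $\Sigma=\{g=0\}$ for an analytic $g$ with $g(0)=0$ and $\nabla g(0)\ne 0$; the fold-of-order-$n$ hypothesis becomes $\partial_x^kg(0)=0$ for $0<k<n$ together with $\partial_x^ng(0)\ne 0$, the sign of this last derivative distinguishing visible from invisible. Since $\partial_yg(0)\ne 0$, the implicit function theorem writes $\Sigma$ locally as an analytic graph $y=\alpha x^n+O(x^{n+1})$ with $\alpha\ne 0$ of the prescribed sign. The orbit through $(x_0,y_0)\in\Sigma$ is the horizontal segment $t\mapsto(x_0+t,y_0)$, and its subsequent meeting with $\Sigma$ is obtained by solving $y_0=\alpha(x_0+t)^n+O((x_0+t)^{n+1})$. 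Because $n$ is even and $\alpha$ has the sign dictated by invisibility, this yields a unique positive solution $\tau^+(q_0)>0$ on the appropriate one-sided half of $\Sigma$ at $p$, along which the orbit lies in $\Sigma^+$; the visible case is identical after exchanging $\Sigma^+$ and $\Sigma^-$.

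\emph{Main obstacle.} The delicate step is excluding, in part (i), an orbit in $K$ that tends to $p$: this requires the analytic fact that a non-characteristic orbit approaching an isolated singularity of an analytic planar vector field must spiral around it (Bendixson's sectorial decomposition, or equivalently Seidenberg--Dumortier desingularization), together with the observation that such a spiral must meet the regular analytic arc $\Sigma$ infinitely often. The remaining ingredients --- the index-theorem obstruction to a periodic orbit in $\overline{\Sigma^+}$ and the normal-form computation in part (ii) --- are routine.
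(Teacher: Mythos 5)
Your proposal is correct in substance but takes a genuinely different route from the paper's for part (i). The paper argues directly: it quotes the Andronov dichotomy (an orbit of an analytic vector field approaching an isolated singular point either spirals toward it or is a characteristic orbit) together with the finite decomposition of a neighborhood of $p$ into elliptic, hyperbolic and parabolic sectors; since sector borders are characteristic orbits and, by hypothesis, no characteristic orbit lies in $\Sigma^+\cup\Sigma$, either $p$ is monodromic for $Y$ or $V\cap(\Sigma^+\cup\Sigma)$ is contained in a single elliptic or hyperbolic sector, and in either case the passage map from one side of $\Sigma$ to the other is immediate. You instead argue by contradiction, extracting by compactness a full orbit trapped in $\overline{D}\cap\overline{\Sigma^+}$ and excluding its possible limit behaviour via Poincar\'e--Bendixson, the index theorem, and the same Andronov dichotomy. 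The paper's route is shorter and constructive but leans on the full sectorial decomposition theorem; yours trades that for Poincar\'e--Bendixson plus an index argument, at the cost of a compactness extraction whose details you should write out (in particular the choice of the limit orbit and the degenerate case where the limit set reduces to $\{p\}$, which is handled by applying the dichotomy to the orbit of $q_n$ itself). One step should be reworded: for a non-characteristic orbit tending to $p$, ``meeting $\Sigma$ infinitely often'' does not by itself contradict containment in $\overline{\Sigma^+}=\Sigma^+\cup\Sigma$; the actual contradiction is that a spiralling orbit makes full turns around $p$ and must therefore enter $\Sigma^-$. For part (ii), your flow-box computation is a fleshed-out version of the paper's one-line observation that for an invisible (resp.\ visible) fold$_n$ the orbit through $p$ lies locally in $\Sigma^-$ (resp.\ $\Sigma^+$), so that nearby orbits issuing from $\Sigma$ into $\Sigma^+$ (resp.\ $\Sigma^-$) must return to $\Sigma$; both arguments are fine, and yours supplies the normal-form detail the paper omits.
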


\begin{proof}
Firstly, we suppose that $p$ is a singular point of $Y$. We deal with the case that $Y$ does not have a characteristic orbit in $\Sigma^+\cup\Sigma$ associated with $p$. The other case is analogous.  It is well known that if $p$ is a singular point of an analytical vector field $Y$ then any trajectory of $Y$ which approaches the singular point $p$ as $t\rightarrow\pm\infty$ either spirals toward to $p$ or is a characteristic orbit of $Y$ (see \cite[Theorem 64, p. 331]{Andronov}).  The hypothesis that $Y$ does not have characteristic orbit in $\Sigma^+\cup\Sigma$ associated with $p$ implies that either $p$ is a monodromic singular point of $Y$ or the characteristic orbits of $Y$ are contained in $\Sigma^-$. In the first case, it is obvious that the half-Poincar\'e return map is well defined in $V\cap(\Sigma^+\cup\Sigma)$, for some neighborhood $V$ of $p$. In the second one, we recall that any neighborhood $V$ of an isolated singular point $p$ of an analytical vector field $Y$ is a finite union of elliptic, hyperbolic, or parabolic sectors (see  \cite{Andronov}). Thus, as the borders of sectors of singular points are characteristic orbits, it follows that $V\cap(\Sigma^+\cup\Sigma)$ is contained either in an elliptic or a hyperbolic sector of $p$. So, the half-Poincar\'e return map is well defined in $V\cap(\Sigma^+\cup\Sigma)$ for both cases. 

If $p$ is an invisible fold$_n$ of $Y$ and $Y$ then the orbit of $Y$ by $p$ is contained in $\Sigma^-$. Therefore the half-Poincar\'e return map  is well defined in $V\cap(\Sigma^+\cup\Sigma)$. The case where $p$ is a visible fold$_n$ of $Y$ is analogous.
\end{proof}

\begin{observation} If $p$ is  a $\Sigma$-singular point of $X=(X^+,X^-)$ with $X^
	+ (p)=0$  (resp. $X^-(p)=0$) and $p$ is an elementary singular point of $X^+$  (resp. $X^-$) with real eigenvalues, then by the proof of Lemma \ref{prop5} we conclude that it cannot be a $\Sigma$-monodromic singular point. Because, in this case, $X^+$  (resp. $X^-$) has characteristic orbits associated with $p$ in $\Sigma^+\cup \Sigma$ and $\Sigma^-\cup \Sigma$.
\end{observation}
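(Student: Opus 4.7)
The plan is to verify that any sufficiently small neighborhood of $p$ contains characteristic orbits of $X^+$ (resp.\ $X^-$) lying in both $\Sigma^+\cup\Sigma$ and $\Sigma^-\cup\Sigma$, and then to invoke Theorem~\ref{the:01}. I will work out the case $X^+(p)=0$; the other case is symmetric.

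First I would appeal to the Andronov--Leontovich structure theorem already invoked in the proof of Lemma~\ref{prop5}: a punctured neighborhood of an isolated singular point of an analytic planar vector field decomposes into finitely many elliptic, parabolic, and hyperbolic sectors whose boundaries are characteristic orbits. For an elementary singular point with real eigenvalues this decomposition takes one of three very explicit shapes. If $p$ is a hyperbolic node, every orbit near $p$ tends to $p$ tangent to an eigendirection, so every orbit in a punctured neighborhood is characteristic. If $p$ is a hyperbolic saddle, the four separatrices, tangent at $p$ to the two independent eigenvectors, are characteristic orbits. If $p$ is semi-hyperbolic, the strong (un)stable manifold associated with the nonzero eigenvalue contributes two characteristic orbits tangent to that eigendirection (and in the saddle-node case the entire node side of the center manifold supplies further characteristic orbits).

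Next I would locate these characteristic orbits with respect to $\Sigma$, which is a regular analytic curve through $p$ and therefore has a well-defined tangent line $T_p\Sigma$. In the node case, characteristic orbits approach $p$ in every direction of the plane, so $V\cap\Sigma^+$ and $V\cap\Sigma^-$ are both filled with them. In the saddle case the two eigendirections are linearly independent, so at most one of them can lie in $T_p\Sigma$; the other is transverse to $\Sigma$, and the associated separatrix yields one branch in $\Sigma^+$ and one branch in $\Sigma^-$. The semi-hyperbolic case is handled analogously: either the strong eigendirection is transverse to $T_p\Sigma$ and the two strong-manifold branches split between the two sides, or it is tangent to $T_p\Sigma$, in which case the center-manifold direction is transverse and the node side of the saddle-node decomposition again supplies characteristic orbits on both sides of $\Sigma$.

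With these characteristic orbits in hand, the observation follows by contraposition from parts (ii) and (iii) of Theorem~\ref{the:01}: if $p$ were $\Sigma$-monodromic, then $X^+$ would have no characteristic orbit in $\Sigma^+$ (and similarly $X^-$ none in $\Sigma^-$), a conclusion just ruled out. The main subtlety I foresee lies in the borderline configurations in which an eigendirection is tangent to $T_p\Sigma$ and the associated analytic branch locally curves into only one half-plane; however, in each such configuration the linear independence of the remaining eigendirection (or of the center-manifold tangent, in the semi-hyperbolic case) produces a second characteristic orbit transverse to $\Sigma$, which restores the required presence of characteristic orbits on both sides and closes the argument.
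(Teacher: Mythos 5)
Your argument is correct and takes essentially the same route as the paper, which justifies the observation only by pointing back to the sectorial structure of isolated singular points of analytic vector fields used in the proof of Lemma~\ref{prop5}; your explicit case analysis (node, saddle, semi-hyperbolic) supplies the detail that this pointer leaves implicit. In particular, your key step --- that the two eigendirections of an elementary singular point are linearly independent, so at least one is transverse to the tangent line of $\Sigma$ at $p$, and the characteristic orbits tangent to that direction contribute branches on both sides of $\Sigma$ --- is exactly the justification, omitted in the paper, for the claim that characteristic orbits occur in both $\Sigma^+\cup\Sigma$ and $\Sigma^-\cup\Sigma$.
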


In the classic theory of analytical vector fields, we have that monodromy implies that the Poincar\'e return map is well defined. The next theorem shows that we have a similar result for piecewise analytical vector fields.

\begin{theorem}\label{teo:mon_ret_map}
	Let $p$ be an isolated $\Sigma$-singular point of a piecewise analytical vector field  $X=(X^+, X^-)$. If 	$p$ is a $\Sigma$-monodromic singular point, then the Poincar\'e return map is well defined.
\end{theorem}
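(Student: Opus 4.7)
The plan is to combine the trichotomy furnished by Theorem~\ref{the:01} with Lemma~\ref{prop5}: in each of the three cases I would apply Lemma~\ref{prop5} separately to $X^+$ and $X^-$ to produce the half-Poincar\'e maps $\Pi^+$ and $\Pi^-$ on half-neighborhoods of $p$ in $\Sigma$, and then appeal to the monodromy hypothesis to match their domains and ranges so that $\Pi=\Pi^-\circ\Pi^+$ is well defined.

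In case (i) of Theorem~\ref{the:01}, $p$ is a fold of $X^+$ with $(X^+)^{n^+}f(p)<0$, so the $X^+$-orbit through $p$ bends into $\Sigma^-$, and a fold of $X^-$ with $(X^-)^{n^-}f(p)>0$, so the $X^-$-orbit through $p$ bends into $\Sigma^+$. Applying Lemma~\ref{prop5}(ii) to $X^+$ therefore supplies $\Pi^+$ via trajectories in $\Sigma^+$, and applying it to $X^-$ (with the two sides swapped) supplies $\Pi^-$ via trajectories in $\Sigma^-$. In case (ii), say $X^+(p)=0$ and $X^-(p)\neq 0$, the fold piece is unchanged and $\Pi^-$ is produced exactly as above. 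For $\Pi^+$ I would apply Lemma~\ref{prop5}(i) to $X^+$; the hypothesis of that lemma demands no characteristic orbit of $X^+$ in $\Sigma^+\cup\Sigma$, whereas Theorem~\ref{the:01}(ii) only forbids such an orbit in the open region $\Sigma^+$. This gap is closed by the proposition preceding Theorem~\ref{the:01}, which gives $X^+f\neq 0$ on $\Sigma\setminus\{p\}$ near $p$: hence $X^+$ is transverse to $\Sigma$ off $p$ and no characteristic orbit of $X^+$ can lie along $\Sigma$ near $p$. Case (iii) is settled by applying Lemma~\ref{prop5}(i) symmetrically to each of $X^\pm$ with the same transversality argument.

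The main obstacle I anticipate is not computational but a bookkeeping check: one must verify that $\Pi^+$ sends its half-neighborhood of $p$ in $\Sigma$ into the half-neighborhood on which $\Pi^-$ is defined, so that $\Pi=\Pi^-\circ\Pi^+$ actually makes sense. I would resolve this by contradiction, showing that any mismatch would produce, via Definition~\ref{def_ChO}\,(i) or (ii), a regular trajectory of $X^+$ or $X^-$ realizing a $\Sigma$-characteristic orbit associated with $p$, contradicting the $\Sigma$-monodromy hypothesis.
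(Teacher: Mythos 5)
Your proposal is correct and follows essentially the same route as the paper: the paper's proof also reduces the statement to Lemma~\ref{prop5} applied to $X^+$ in $\Sigma^+\cup\Sigma$ and to $X^-$ in $\Sigma^-\cup\Sigma$, and then composes the two half-maps using the sewing condition $X^+f\cdot X^-f\mid_{V\setminus\{p\}}>0$. Your extra remark closing the gap between ``no characteristic orbit in $\Sigma^+$'' and ``no characteristic orbit in $\Sigma^+\cup\Sigma$'' via transversality of $X^+$ to $\Sigma$ off $p$ is a detail the paper passes over silently, and it is a correct and worthwhile addition.
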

\begin{proof}
	As $p$ is a  $\Sigma$-monodromic singular point, it follows that $X^+$ (resp. $X^-$ ) satisfies the hypothesis of Lemma \ref{prop5} in $\Sigma^+\cup\Sigma$ (resp.  $\Sigma^-\cup\Sigma$) with respect to $p$.  Then the half-Poincar\'e return maps are well defined. Moreover, there exists a neighborhood $V$ such that  $X^+f\cdot X^-f\mid_{V\setminus\{p\}}>0$. Therefore the Poincar\'e return map is obtained by the composition of these half-Poincar\'e return maps, i.e., the Poincar\'e return map is well defined.
\end{proof}

\section{Analyticity of the displacement function}
\label{sec:PRM2}

According to Theorem \ref{teo:mon_ret_map} monodromy implies existence of the Poincar\'e return map. Our concern now is with the analyticity of the displacement function. We will see that in the neighborhood of certain types of $\Sigma$-monodromic singular points, the displacement function is analytic.

\medskip

\noindent {\bf Hypothesis H}. {\it 
		Let  $X=(X^+,X^-)$ be a piecewise analytical vector field having an isolated $\Sigma$-monodromic singular point $p_0\in\Sigma$. Without loss of generality we assume that $p_0$ is the origin and the orbits of $X$ turn around the origin in counterclockwise sense. We say that $X$ satisfies the Hypothesis {\bf H} if 
		\begin{itemize}
			\item [(i)] there exists a  continuous change of coordinates such that the switching curve $\Sigma$, in these new coordinates, is  $\{y=0\}$; and
			\item [(ii)] there are integers $p,\ q,\ r$ and $s$ such that in the weight polar coordinates $(x,y)=(\rho^p\cos\theta,\rho^r\sin\theta)$ in $\Sigma\cup\Sigma^+$ and $(x,y)=(\rho^q\cos\theta,\rho^s\sin\theta)$ in $\Sigma\cup\Sigma^-$ the systems associated with vector fields $X^+$ and $X^-$ are equivalent to differential equations
			\begin{equation}\label{eqpolarpeso}
				\frac{d\rho}{d\theta}  =  \frac{F^\pm(\theta, \rho) }{G^{\pm}(\theta, \rho)},
			\end{equation}
			where $F^\pm$ and $G^\pm$ are analytical functions with $F^\pm (\theta, 0)=0$ for all $\theta\in \mathbb R$, $G^+(\theta, 0)\neq0$ for all $\theta\in[0,\pi]$ and $G^-(\theta, 0)\neq0$ for all $\theta\in[-\pi,0]$.
		\end{itemize}
	}

\medskip

Before we state and prove the main theorem of this section, we present classes of piecewise analytical vector fields that satisfy the Hypothesis~{\bf H}.  Theorem \ref{the:01} shows the three cases of monodromy. Next proposition deals with the first case, which is the case where the $\Sigma$-singular point is both fold points for $X^+$ and $X^-$.
	
\begin{proposition}
		\label{lemaFold}
		Let $X=(X^+,X^-)$ be a piecewise analytical  vector field and $\Sigma=f^{-1}(0)$ a curve, where $f:\mathbb R^2\rightarrow\mathbb R$ is  an analytical function having $0$ as its regular value.  Assume that $p$ is a fold point of order $n^{\pm}$ of $X^{\pm}$ with $(X^+)^{n^+}f(p)<0$, $(X^-)^{n^-}f(p)>0$ and  $X^+f\cdot X^-f\mid_{V\setminus\{p\}}>0$ for some neighborhood $V$ of $p$, then $X$ satisfies the Hypothesis~{\bf H}.
	\end{proposition}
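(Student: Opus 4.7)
The plan is to put $X$ into the required form by an explicit chain of elementary normalizations, deriving items (i)--(iv) of Hypothesis~\textbf{H} in order. First I would translate so that $p=0$ and apply the implicit function theorem (valid because $0$ is a regular value of the analytic function $f$) to obtain an analytic change of coordinates under which $\Sigma$ becomes $\{y=0\}$; after replacing $f$ by $y$ (absorbing a non-vanishing analytic unit, which preserves the given sign hypotheses up to a common factor), items (i) and (iii) hold. For (ii), if in these coordinates the orbits of $X$ turn clockwise, I would precompose with the reflection $(x,y)\mapsto(-x,y)$, which fixes $\{y=0\}$ and each $\Sigma^\pm$ while reversing orientation; note that a fold of even order remains a fold of the same order under this change and the sign of $(X^\pm)^{n^\pm}f(0)$ is unchanged.

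Next I would read off the local analytic structure of $X^\pm=(P^\pm,Q^\pm)$ imposed by the fold-of-order-$n^\pm$ hypothesis. The conditions $X^\pm(0)\neq 0$ and $X^\pm f(0)=0$ give $P^\pm(0)=a^\pm\neq 0$ and $Q^\pm(0)=0$. An induction on the Lie derivatives $(X^\pm)^kf$ (using $f=y$, and that at the origin $(X^\pm)^kf$ reduces, modulo previously vanishing terms, to $(a^\pm)^{k-1}\partial_x^{k-1}Q^\pm(0)$) shows that the vanishing $(X^\pm)^kf(0)=0$ for $k<n^\pm$ together with $(X^\pm)^{n^\pm}f(0)\neq 0$ is equivalent to
\[
Q^\pm(x,0)=\alpha^\pm\,x^{n^\pm-1}+O(x^{n^\pm}),\qquad \alpha^\pm\neq 0,
\]
with $\alpha^\pm(a^\pm)^{n^\pm-1}$ a positive multiple of $(X^\pm)^{n^\pm}f(0)$. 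In particular one can write $Q^\pm(x,y)=x^{n^\pm-1}\widehat Q^\pm(x)+y\,R^\pm(x,y)$ with $\widehat Q^\pm, R^\pm$ analytic and $\widehat Q^\pm(0)=\alpha^\pm$.

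To obtain (iv) I would take the weights $p=q=1$, $r=n^+$, $s=n^-$ and substitute $(x,y)=(\rho\cos\theta,\rho^{n^+}\sin\theta)$ in $\Sigma\cup\Sigma^+$ (and analogously in $\Sigma\cup\Sigma^-$). A Cramer-rule computation for $(\dot\rho,\dot\theta)$, combined with the factorization of $Q^+$ above (which contributes a common factor $\rho^{n^+-1}$ to both numerator and denominator), produces
\[
\frac{d\rho}{d\theta}=\rho\,\frac{\cos\theta\,P^+ + \sin\theta\,\widetilde Q^+}{-n^+\sin\theta\,P^+ + \cos\theta\,\widetilde Q^+},
\]
where $\widetilde Q^+(\theta,\rho)=\cos^{n^+-1}\theta\,\widehat Q^+(\rho\cos\theta)+\rho\sin\theta\,R^+(\rho\cos\theta,\rho^{n^+}\sin\theta)$ is analytic and $\widetilde Q^+(\theta,0)=\alpha^+\cos^{n^+-1}\theta$. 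This is already of the required form $F^+/G^+$ with $F^+(\theta,0)\equiv 0$, and an identical computation in $\Sigma\cup\Sigma^-$ yields the analogous equation for $X^-$.

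The step I expect to be the main obstacle is checking $G^\pm(\theta,0)\neq 0$ on $[0,\pi]$ and $[-\pi,0]$, which is a sign-bookkeeping problem. At $\rho=0$ one has $G^+(\theta,0)=-n^+a^+\sin\theta+\alpha^+\cos^{n^+}\theta$. The counterclockwise normalization, together with the hypothesis $(X^+)^{n^+}f(0)<0$ and $n^+$ even, should force $a^+<0$ and hence $\alpha^+>0$; both summands are then non-negative on $[0,\pi]$ and cannot vanish simultaneously (since $\sin\theta=0$ implies $\cos^{n^+}\theta=1$), so $G^+(\theta,0)>0$ throughout. The symmetric argument for $X^-$ on $[-\pi,0]$ uses $(X^-)^{n^-}f(0)>0$. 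Verifying the compatibility between the choice of counterclockwise orientation and the fold-sign hypotheses (so that both enforce the same sign of $\alpha^\pm$) is the delicate point; everything else reduces to routine polar-coordinate calculation.
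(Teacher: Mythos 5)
Your proposal is correct and follows essentially the same route as the paper: reduce to $f=y$, write the local normal form forced by the fold of order $n^{\pm}$, pass to weighted polar coordinates $(x,y)=(\rho\cos\theta,\rho^{n^{\pm}}\sin\theta)$, and check that $G^{\pm}(\theta,0)=\alpha^{\pm}\cos^{n^{\pm}}\theta-n^{\pm}a^{\pm}\sin\theta$ does not vanish on the relevant half-circle. The only (harmless) difference is at the point you flag as delicate: the paper does not need the counterclockwise normalization to pin down the individual signs of $a^{\pm}$ and $\alpha^{\pm}$, since the product conditions $\pm a^{\pm}\alpha^{\pm}<0$ alone already make the two summands of $G^{\pm}(\theta,0)$ have the same sign on $[0,\pi]$ (resp.\ $[-\pi,0]$) without vanishing simultaneously.
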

	
\begin{proof}
		We can assume that $p=(0,0)$. By hypothesis $p$ is a fold point of order $n^{\pm}$ of $X^{\pm}$. This implies that, in local coordinates, we have $f(x,y)=y$ and the differential system associated with $X^{\pm}$ is given by
		\begin{equation}
			\label{sis1II}
			\begin{aligned}
				\dot{x} & = a_{00}^{\pm} + {\mathcal O}(|(x,y)|), \\
				\dot{y} & = b_{n^\pm-1,0}^{\pm}x^{n^\pm-1} + {\mathcal O}(x^{n^\pm})+y(b_{01}^{\pm}+b_{11}^{\pm}x+b_{02}^{\pm}y+{\mathcal O}(|(x,y)|^2)),
			\end{aligned}
		\end{equation}
		where $(X^{+})^{n^{+}}f(p)=(n^{+}-1)!\ a_{00}^{+} b_{n^+-1,0}^{+}<0$ and $(X^{-})^{n^{-}}f(p)=(n^{-}-1)!\ a_{00}^{-} b_{n^--1,0}^{-}>0$.  Doing rescaling of the time variable, if necessary, we can assume that the orbits of $X$ turn around of the origin in counterclockwise sense.  In order to verify Hypothesis~{\bf H} we consider the weight polar coordinates $(x,y)=(\rho \cos \theta, \rho^{n^+}\sin\theta)$ in $\Sigma\cup\Sigma^+$ and $(x,y)=(\rho \cos \theta, \rho^{n^-}\sin\theta)$ in $\Sigma\cup\Sigma^-$. So, using these new coordinates, system \eqref{sis1II} is equivalent to differential equation \eqref{eqpolarpeso} with 
		\[
		F^\pm(\theta,\rho)=( a_{00}^\pm\cos\theta  +b_{n^\pm-1,0}^\pm\sin\theta  \cos^{n^\pm-1}\theta )\rho+ {\mathcal O}(\rho^2)
		\] 
		and $G^\pm(\theta,\rho)=b_{n^\pm-1,0}^\pm \cos^{n^\pm}\theta -n^\pm a_{00}^\pm\sin\theta+ {\mathcal O}(\rho)$.
		 Note that the conditions $\pm a_{00}^\pm b_{n^\pm-1,0}^\pm<0$ imply $G^+(\theta, 0)\neq0$ for all $\theta\in[0,\pi]$ and $G^-(\theta, 0)\neq0$ for all $\theta\in[-\pi,0]$. So, $X$ satisfies the Hypothesis~{\bf H}.
\end{proof}

The next result exhibits another class of piecewise analytical vector fields that satisfies Hypothesis~{\bf H}.  According to items (ii) and (iii) of Theorem \ref{the:01}, we have to consider the case when $p$ is a $\Sigma$-singular point of $X=(X^+,X^-)$ and satisfies $X^+(p)=0$ or $X^-(p)=0$.

\begin{proposition}\label{prop:09}
	Let $X=(X^+,X^-)$ be a piecewise analytical vector field and $\Sigma=f^{-1}(0)$ a curve, where $f:\mathbb R^2\rightarrow\mathbb R$ is analytic having $0$ as a regular value.  Assume that $p$ is a fold point of order $n^{\pm}$ of $X^{\pm}$ with $\pm(X^\pm)^{n^\pm}f(p)<0$ or $p$ is a singular point of $X^\pm$ having linear part with eigenvalues $\alpha\pm\beta i$, with $\beta\neq0$. Then,  if  $X^+f\cdot X^-f\mid_{V\setminus\{p\}}>0$ for some neighborhood $V$ of $p$, $X$ satisfies the Hypothesis~{\bf H}.
\end{proposition}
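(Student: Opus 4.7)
The plan is to verify Hypothesis~\textbf{H}(iv) one side at a time, because part (iv) allows the weights $(p,r)$ over $\Sigma^+$ and $(q,s)$ over $\Sigma^-$ to be chosen independently. After the usual local change of coordinates sending $p$ to the origin and straightening $\Sigma$ to $\{y=0\}$, I split into four subcases according to which of the two alternatives (fold or weak focus) is realized by $X^+$ and by $X^-$. In the fold subcase on the $+$ side, with $X^+$ a fold of order $n^+$ and $(X^+)^{n^+}f(p)<0$, the reasoning is exactly the one appearing in Proposition~\ref{lemaFold}: the weighted polar coordinates $(x,y)=(\rho\cos\theta,\rho^{n^+}\sin\theta)$ put $X^+$ in the form \eqref{eqpolarpeso}, with $F^+(\theta,0)\equiv 0$ and $G^+(\theta,0)=b^+_{n^+-1,0}\cos^{n^+}\theta-n^+ a^+_{00}\sin\theta$, which is nonzero on $[0,\pi]$ because $a^+_{00}b^+_{n^+-1,0}<0$. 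The analogous formula covers $X^-$ on $[-\pi,0]$ when $X^-$ is a fold.

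When $X^+$ instead has an equilibrium at the origin with linear part having eigenvalues $\alpha\pm\beta i$, $\beta\neq 0$, I use the unweighted polar coordinates $(x,y)=(\rho\cos\theta,\rho\sin\theta)$, that is $(p,r)=(1,1)$. Writing the linear part as $\dot x=ax+by+\cdots$, $\dot y=cx+dy+\cdots$, a direct substitution yields
\begin{equation*}
\dot\rho=\rho\bigl(a\cos^2\theta+d\sin^2\theta+(b+c)\sin\theta\cos\theta\bigr)+O(\rho^2), \qquad \dot\theta=h(\theta)+O(\rho),
\end{equation*}
with $h(\theta):=c\cos^2\theta-b\sin^2\theta+(d-a)\sin\theta\cos\theta$. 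Hence $F^+(\theta,0)=0$, and one has only to show that $G^+(\theta,0)=h(\theta)$ is nowhere zero on $[0,\pi]$. For $\theta\in(0,\pi)$, divide $h$ by $\sin^2\theta$ and set $u=\cot\theta$: it suffices that the quadratic $q(u)=cu^2+(d-a)u-b$ has no real root. Its discriminant is $(d-a)^2+4bc=(a+d)^2-4(ad-bc)=(2\alpha)^2-4(\alpha^2+\beta^2)=-4\beta^2<0$, and the same identity forces $c\neq 0$, which handles the endpoints $\theta=0,\pi$. The analogous argument on $[-\pi,0]$ settles the focus subcase for $X^-$.

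The subtle point is Hypothesis~\textbf{H}(ii). In each subcase the nonvanishing of $G^\pm(\theta,0)$ fixes a rotation sense for $X^\pm$ (the sign of $b^+_{n^+-1,0}$ in the fold case, the sign of $c$ in the focus case), and the sewing hypothesis $X^+f\cdot X^-f>0$ on $V\setminus\{p\}$ forces these two senses to be compatible; a single time rescaling, permitted by (i), then normalizes the common rotation to counterclockwise. Assembling the four subcase verifications with this alignment completes the proof. I expect the focus subcase to be the main technical point, since the monotonicity of the angular equation must be deduced directly from the complex-eigenvalue condition rather than by a linear normal-form reduction, which would in general destroy the straightening of $\Sigma$ to $\{y=0\}$.
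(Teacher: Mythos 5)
Your proposal is correct and follows essentially the same route as the paper: reduce the fold subcases to the computation of Proposition~\ref{lemaFold} in weighted polar coordinates, and in the focus subcase pass to ordinary polar coordinates and show the angular component $c\cos^2\theta+(d-a)\cos\theta\sin\theta-b\sin^2\theta$ never vanishes because $(a-d)^2+4bc=-4\beta^2<0$ (the paper completes the square where you examine the discriminant of $cu^2+(d-a)u-b$, which is the same fact). Your explicit remarks that $c\neq 0$ and that the sewing condition aligns the two rotation senses are welcome details the paper leaves implicit, but they do not change the argument.
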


\begin{proof}
	We can assume that $p=(0,0)$ and $f(x,y)=y$.  If $p$ is a fold point of $X^+$ (resp. $X^-$) then we apply the ideas of proof of Proposition~\ref{lemaFold} using weight polar coordinates $(x,y)=(\rho \cos \theta, \rho^{n^+}\sin\theta)$ in $\Sigma\cup\Sigma^+$ (resp. $(x,y)=(\rho \cos \theta, \rho^{n^-}\sin\theta)$ in $\Sigma\cup\Sigma^-$). In this case, as we have seen in the proof of Proposition~\ref{lemaFold}, we have equation \eqref{eqpolarpeso} with $F^+ (\theta, 0)=0$  (resp. $F^- (\theta, 0)=0$) for all $\theta\in \mathbb R$ and $G^+(\theta, 0)\neq0$ for all $\theta\in[0,\pi]$ (resp. $G^-(\theta, 0)\neq0$ for all $\theta\in[-\pi,0]$).  If $p$ is a singular point of $X^+$ (resp. $X^-$) having linear part with eigenvalues $\alpha\pm\beta i$, with $\beta\neq0$, then we write the system as
		\begin{equation}\label{eq7}
		\begin{aligned}
			\dot{x} & =a x +b y + \sum_{i=2}^\infty P_i(x,y) , \\ 
			\dot{y} & =  c x + d y + \sum_{i=2}^\infty Q_i(x,y),
		\end{aligned}
	\end{equation}
	where $P_i$ and $Q_i$ are homogeneous polynomials of degree $i$ and $(a-d)^2+4bc<0$. Using polar coordinates $(x,y)=(\rho \cos \theta, \rho\sin\theta)$ system \eqref{eq7} becomes
	\[
	\begin{aligned}
		\dot{\rho} & = \alpha(\theta) \rho +\sum_{i=2}^\infty A_i(\theta)\rho^{i} , \\
		\dot{\theta} & =  \beta(\theta) +\sum_{i=2}^\infty R_i(\theta)\rho^{i-1} ,
	\end{aligned}
	\]
	where $\alpha(\theta)=(a-d)\cos^2\theta+(b+c)\cos\theta\sin\theta+d$, $\beta(\theta)= c\cos^2\theta+(d-a)\cos\theta\sin\theta-b\sin^2\theta$,  $A_i(\theta)= P_i(\cos\theta,\sin\theta)\cos\theta+  Q_i(\cos\theta,\sin\theta)\sin\theta$ and $R_i(\theta)= Q_i(\cos\theta,\sin\theta)\cos\theta-  P_i(\cos\theta,\sin\theta)\sin\theta$. 
	We observe that 
	\[
	\beta(\theta) = c\left(\left(\cos\theta +\frac{(d-a)}{2c}\sin\theta\right)^2 - \frac{((a-d)^2+4bc)}{4c^2}\sin^2\theta \right)\!\!\cdot
	\]
	The fact that $(a-d)^2+4bc<0$ implies that $\beta(\theta)\neq0$ for all $\theta\in[0,2\pi]$. In this case is also clear that $X$ satisfies Hypothesis~{\bf H} because equation \eqref{eqpolarpeso} is given by
	\begin{equation*}\label{eq9}
		\frac{d\rho}{d\theta}  = \frac{\alpha(\theta) \rho +\sum_{i=2}^\infty A_i(\theta)\rho^{i}}{ \beta(\theta) +\sum_{i=2}^\infty R_i(\theta)\rho^{i-1}}\cdot
	\end{equation*} 
\end{proof}

\begin{lemma}
\label{lemaCusp}
	Let $Y:\mathbb R^2\rightarrow\mathbb R^2$ be an analytical vector field having $p$ as an isolated nilpotent singularity. If  $\Sigma$ is an analytical curve (i.e. $\Sigma=f^{-1}(0)$, where $f:\mathbb R^2\rightarrow\mathbb R$ is analytic having $0$ as a regular value) passing through $p$ transversal to the eigenspace of $DY(p)$, then there is an analytical change of coordinates such that in these new coordinates  $p=(0,0)$, $\Sigma$ is locally the set $\{(x,y):x=0\}$ and in a neighborhood of $p$ the analytical differential system associated to the vector field $Y$ writes as
	\begin{equation}\label{eq1}
		\begin{aligned}
			\dot{x} & = y, \\
			\dot{y} & = g(x)+yh(x)+y^2B(x,y),
		\end{aligned}
	\end{equation}
	where $g(x)=ax^m + o(x^m)$, for some $m\geq2$ with $a\neq0$, and the function $h$ is either identically null or there exist $b\neq0$ and $n\geq1$ such that $h(x)=bx^n + o(x^n)$.  
\end{lemma}

\begin{proof}
	It is easy to see that there are coordinates $(x, y)$ such that in these coordinates $p$ is the origin and locally $\Sigma=\{(x,y) : x=0\}$. As $p$ is a nilpotent singular point, the linear part of $Y$ at $p$ is a matrix
	\[
	DY(0, 0)=\left(\begin{array}{cc} a & b \\ c &d\end{array}\right)
	\] 
	satisfying $a+d=0$, $ad-bc=0$ and $a^2+b^2+c^2+d^2\neq0$. Now we prove that $b$ cannot be zero. If $b=0$ then $a=d=0$ and $c\neq0$. But in this case the eigenspace of $DY(0, 0)$ is parallel to $\Sigma$, which is not possible due to the hypothesis of transversality.
	
	The case $c=0$ implies $a=d=0$ and $b\neq0$.  In this case the linear change of coordinates $x=b\,u$ and $y=v$ preserves $\Sigma$ and transforms $DY(0, 0)$ into 
	\begin{equation}\label{eq2}
		\left(\begin{array}{cc} 0 & 1 \\ 0 &0\end{array}\right).
	\end{equation}
	
        	In the case $c\neq0$ we have $d=-a$ and $b=-a^2/c$. We use the linear change of coordinates $x=u$ and $y= c u/a -cv/a^2$ that preserves $\Sigma$ and transforms $DY(0, 0)$ into the same matrix presented in \eqref{eq2}.
	
	The differential system associated to the vector field $Y$ writes as
	\begin{equation}\label{n1}
		\begin{aligned}
			\dot{u} & = v + P_2(u,v), \\
			\dot{v} & = Q_2(u,v).
		\end{aligned}
	\end{equation}
	Now we perform the analytical change of coordinates $x=u$ and $y=v+P_2(u,v)$ that also preserves $\Sigma$. So, system \eqref{n1} becomes \eqref{eq1},
	where $g(x)=Q_2(x,0)$ and $h(x)=\displaystyle\frac{\partial Q_2}{\partial y}(x,0)$. As $p$ is an isolated singularity, then $g(x)=ax^m + o(x^m)$, for some $m\geq2$ and $a\neq0$. The function $h$ is either identically null or there exist $b\neq0$ and $n\geq1$ such that $h(x)=bx^n + o(x^n)$.
\end{proof}

\begin{proposition}\label{propcusp}
	Let $X=(X^+,X^-)$ be a piecewise analytical vector field and $\Sigma=f^{-1}(0)$ a curve, where $f:\mathbb R^2\rightarrow\mathbb R$ is analytic having $0$ as a regular value.  Assume that $p\in \Sigma$ is either a  monodromic nilpotent singular point or a nilpotent cusp of $X^+$ (resp. $X^-$) such that $\Sigma$ is transversal to the eigenspace of $DX^+(p)$ (resp. $DX^-(p)$),  locally the characteristic orbits of the cusp $p$ are contained in $\Sigma^-$ (resp. $\Sigma^+$) in the cusp case, and $p$ is a fold point of order $n^{-}$ of $X^{-}$ with $(X^-)^{n^-}f(p)>0$ (resp. $n^{+}$ of $X^{+}$ with $(X^+)^{n^+}f(p)<0$) or $p$ is a singular point of $X^-$ (resp. $X^+$) having linear part with eigenvalues $\alpha\pm\beta i$, with $\beta\neq0$. Then,  if  $X^+f\cdot X^-f\mid_{V\setminus\{p\}}>0$ for some neighborhood $V$ of $p$, $X$ satisfies the Hypothesis~{\bf H}.
\end{proposition}
\begin{proof}
First, we assume that $p$ is a cusp of $X^+$, and the case that $p$ is a cusp of $X^-$ is similar. By Lemma \ref{lemaCusp}, there is an analytical change of variables such that locally 
we have that $p=(0,0)$, $\Sigma$  is the 
$y$ axis and the associated differential system to $X^+$ has the form \eqref{eq1}. As $p$ is a cusp, according to \cite[Theorem 3.5, p. 116]{ArtDumLli2006},  for $g$ and $h$ given by  \eqref{eq1},  we have that $g(y)=ay^m + o(y^m)$, for some even $m\geq2$ with $a\neq0$, and the function $h$ is either identically null or there exist $b\neq0$ and  $n\geq 1$ such that $h(y)=by^n + o(y^n)$ and $m < 2n+1$. Now, after the change of variables $(x, y)\mapsto (-y, x)$, locally, we have that $\Sigma$ becomes the $x$ axis, i.e., $f(x,y)=y$ and the system associated to $X^+$ writes as
\begin{equation}\label{n2}
		\begin{aligned}
			\dot{x} & = g(-y)+xh(-y)+x^2B(-y, x), \\
		\dot{y} & = -x.
         	\end{aligned}
	\end{equation}
	Using  weight polar coordinates $(x,y)=(\rho^{m+1} \cos \theta, \rho^2\sin\theta)$ system \eqref{n2} becomes
 \begin{equation}
 \label{n3}
\begin{aligned}
\dot{\rho} & = \frac{\rho^m( a\cos\theta\sin^m\theta-\cos\theta\sin\theta ) +{\mathcal O}(\rho^{m+1})}{(m-1)\cos^2\theta+2} , \\ \\
\dot{\theta} & =  \frac{\rho^{m-1}(-(m+1)\cos^{2}\theta -2a\sin^{m+1}\theta )+{\mathcal O}(\rho^m)}{(m-1)\cos^2\theta+2},
\end{aligned}
\end{equation}
Eliminating the independent variable from system \eqref{n3}, we reduce it to the differential equation
\begin{equation}
\label{n4}
\frac{d\rho}{d\theta}  =  \frac{\cos\theta\sin\theta  -a\cos\theta\sin^m\theta }{(m+1)\cos^{2}\theta +2a\sin^{m+1}\theta }\rho +{\mathcal O}(\rho^2).
\end{equation}
 
The local expression for the characteristic orbits of the cusp singularity, is 
\begin{equation}
\label{n5}
y=-\sqrt[m+1]{\dfrac{(m+1)}{2a}x^2}+o(x^\frac{2}{m+1}).
\end{equation}
Therefore, the hypothesis implies that $\Sigma^-$ contains the characteristic orbits of the cusp at the origin. So, we have that $a>0$. 

Observe that, for differential equation \eqref{n4}, $G^+(\theta, 0)=(m+1)\cos^{2}\theta +2a\sin^{m+1}\theta$, where $G^+$ corresponds to the map given by \eqref{eqpolarpeso}. Thus, we have that $G^+(\theta, 0)\neq0$ for all $\theta\in[0,\pi]$. Note that the pair $(\cos\theta, \sin \theta)$ such that $(m+1)\cos^{2}\theta +2a\sin^{m+1}\theta=0$ is a parametrization of the cusp curve 
\[
\left(x, -\sqrt[m+1]{\dfrac{(m+1)}{2a}x^2}\right). 
\]
Finally, if $p$ is a fold point or a singular point with complex eigenvalues  of $X^-$ then we apply the ideas of the proofs of Propositions~\ref{lemaFold}, \ref{prop:09} using weight polar coordinates  $(x,y)=(\rho \cos \theta, \rho^{n^-}\sin\theta)$ or polar coordinates $(x,y)=(\rho \cos \theta, \rho\sin\theta)$  in $\Sigma\cup\Sigma^-$, respectively. In this case, as we have seen in the proofs of Propositions~\ref{lemaFold}, \ref{prop:09}, we have an equation of the form \eqref{eqpolarpeso} with   $G^-(\theta, 0)\neq0$ for all $\theta\in[-\pi,0]$.  Hence, from hypothesis $X^+f\cdot X^-f\mid_{V\setminus\{p\}}>0$ and doing rescaling of the time variable (if necessary), for some neighborhood $V$ of $p$, $X$ satisfies the Hypothesis~{\bf H}.
	
Now, we assume that $p$ is a monodromic nilpotent singular point of $X^+$, and the case that $p$ is a monodromic nilpotent singular point of $X^-$ is similar. As in the previous cases, by Lemma \ref{lemaCusp}, there is an analytical change of variables such that locally we have that $p=(0,0)$, $\Sigma$  is the $y$ axis, and the differential system associated to $X^+$ has the form \eqref{eq1}. As $p$ is a monodromic nilpotent singular point, according to \cite[Theorem 3.5, p. 116]{ArtDumLli2006},  for $g$ and $h$ given by  \eqref{eq1},  we have that $g(y)=ay^m + o(y^m)$, for some odd $m\geq2$ with $a<0$, and  $h$ is either identically null or there exist $n\geq 1$ and $b\neq0$ such that $h(y)=by^n + o(y^n)$ with $m<2n+1$ or $m=2n+1$  and $b^2+4a(n+1)<0$. Now, after the change of variables $(x, y)\mapsto (-y, x)$, locally, we have that $\Sigma$ becomes the $x$ axis, and the system associated to $X^+$ writes as \eqref{n2}. Using  weight polar coordinates $(x,y)=( \rho^{\frac{m+1}{2}} \cos \theta, \rho \sin\theta)$, we reduce system \eqref{n2}  to the differential equation
\begin{equation}
\label{n6}
\frac{d\rho}{d\theta}  =  \frac{\sin2\theta\big(1+a\sin^{m-1}\theta\big)}{(m+1)\cos^{2}\theta -2a\sin^{m+1}\theta }\rho +{\mathcal O}(\rho^2),
\end{equation}
if $m<2n+1$ or it is reduced to the differential equation
\begin{equation}
\label{n7}
\footnotesize
\frac{d\rho}{d\theta}  =  \frac{\sin2\theta\big((-1)^nb\cos\theta\sin^{n-1}\theta-a\sin^{2n}\theta-1\big)}{2\big(a(\sin^{n+1}\theta)^2-(-1)^nb\cos\theta\sin^{n+1}\theta-(n+1)\cos^{2}\theta\big)}\rho +{\mathcal O}(\rho^2),
\end{equation}
if $m=2n+1$. For differential equations \eqref{n6} and \eqref{n7}, we have that $G^+(\theta, 0)$, given by \eqref{eqpolarpeso}, are the maps 
\[
G^+(\theta, 0)=(m+1)\cos^{2}\theta -2a\sin^{m+1}\theta
\] 
and 
\[
G^+(\theta, 0)=a(\sin^{n+1}\theta)^2+(-1)^{n+1}b\cos\theta\sin^{n+1}\theta-(n+1)\cos^{2}\theta,
\] 
respectively. For the first expression, as $m$ is odd and $a<0,$ we have that $G^+(\theta, 0)\neq0$ for all $\theta\in[0,\pi]$. The second expression is not null for $\theta\in\{0, \pi\}$, and for $\theta\in (0, \pi)$, we can write 
\begin{equation}
\label{n8}
\footnotesize
G^+(\theta, 0)=\cos^2\theta\left(a\left(\frac{\sin^{n+1}\theta}{\cos\theta}\right)^2+(-1)^{n+1}b\left(\frac{\sin^{n+1}\theta}{\cos\theta}\right)-(n+1)\right)
\end{equation}
Therefore, as $b^2+4a(n+1)<0$, it follows by \eqref{n8} that $G^+(\theta, 0)\neq0$ for all $\theta\in[0,\pi]$. Hence, as in the previous cases, $X$ satisfies the Hypothesis~{\bf H}.
\end{proof}

\begin{theorem}\label{fund_lemma}
	If a piecewise analytical vector field $X=(X^+, X^-)$ has a $\Sigma$-monodromic singular point and satisfies the Hypothesis~{\bf H}, then there is a choice of coordinates in $\Sigma$ such that in these coordinates the displacement function is analytic.
\end{theorem}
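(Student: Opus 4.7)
The strategy is to push the problem into the weighted polar coordinates furnished by Hypothesis~\textbf{H}, where the ODE has $\rho=0$ as an analytic equilibrium, and then transport the resulting half-return maps back to $\Sigma$ through a single coordinate chosen to reconcile the two (possibly different) weights $p$ and $q$ appearing on the two sides.

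First, I would read the equation $d\rho/d\theta = F^\pm(\theta,\rho)/G^\pm(\theta,\rho)$ through the factorization $F^\pm = \rho\,\widetilde F^\pm$, which is legal because $F^\pm(\theta,0)=0$ and $F^\pm$ is analytic. The non-vanishing of $G^+$ on the compact set $[0,\pi]$ (and of $G^-$ on $[-\pi,0]$) implies that the quotient $h^\pm := \widetilde F^\pm/G^\pm$ is analytic on a strip $|\rho|<\varepsilon$ over the full angular interval. The classical theorem on analytic dependence of solutions of an analytic ODE on initial data, applied on a compact time interval, then gives that the maps
\[
P^+(\rho_0) := \rho^+(\pi;\rho_0), \qquad P^-(\rho_0) := \rho^-(0;\rho_0)
\]
(with $\rho^-$ started at $\theta=-\pi$) are analytic near $\rho_0=0$, vanish there, and satisfy $(P^\pm)'(0)=\exp\!\int h^\pm(\theta,0)\,d\theta>0$ by the variational equation. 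Hence $P^+$ and $P^-$ are analytic diffeomorphisms near the origin.

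Next, I would choose the coordinate $u$ on $\Sigma=\{y=0\}$ as follows: setting $L=\mathrm{lcm}(p,q)$, declare $x = \mathrm{sgn}(u)\,|u|^L$. This is a homeomorphism from a neighborhood of $0$ in $\mathbb R$ onto a neighborhood of $p$ in $\Sigma$, and it is tailored so that the two weighted radii become \emph{integer} powers of $|u|$, namely $\rho_+ = |u|^{L/p}$ and $\rho_- = |u|^{L/q}$. I then transfer $\Pi^+$ into this coordinate: for $u>0$, the endpoint $u'=\Pi^+(u)<0$ satisfies $(-u')^{L/p} = P^+(u^{L/p})$; writing $P^+(\rho)=c^+\rho\bigl(1+O(\rho)\bigr)$ with $c^+>0$ gives $u' = -(c^+)^{p/L}\, u\,\bigl(1+O(u^{L/p})\bigr)^{p/L}$. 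The indicated $L$-th root of a positive analytic unit is analytic by the convergent binomial series, so $\Pi^+(u)$ is analytic in $u$. The identical computation with $q$ replacing $p$ shows that $\Pi^-(u)$, and hence $(\Pi^-)^{-1}(u)$, is analytic, so $\Psi=\Pi^+-(\Pi^-)^{-1}$ is analytic in $u$.

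The main obstacle I foresee is precisely this transfer step when $p\neq q$: neither the ambient coordinate $x$, nor $\rho_+$, nor $\rho_-$ can simultaneously serve as an analytic coordinate for both half-maps. The lcm choice of $u$ reconciles them, but one still must deal with fractional exponents $p/L$ and $q/L$ in the transferred maps, and their analyticity rests on the positivity of $(P^\pm)'(0)$, which is what enables the principal-branch binomial expansion.
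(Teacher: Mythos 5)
Your proposal is correct and takes essentially the same route as the paper: analyticity of the transition maps $\rho_0\mapsto\rho^{\pm}(\pm\pi,\rho_0)$ is extracted from the non-vanishing of $G^{\pm}$ on the relevant compact angular intervals, and a power substitution on $\Sigma$ is used to turn both weighted radii into integer powers of a single new coordinate. The paper merely uses $x_0=w^{pq}$ rather than $\mathrm{lcm}(p,q)$ and records the half-return maps in the original $x$-coordinate, so that $\Pi^{+}(w^{pq})=-(\rho^{+}(\pi,w^{q}))^{p}$ and $\Pi^{-}(w^{pq})=-(\rho^{-}(-\pi,w^{p}))^{q}$ are analytic as plain compositions, with no root extraction or binomial-series step required.
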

\begin{proof}
	Let $\rho^+=\rho^+(\theta,\rho_0)$ be the solution of \eqref{eqpolarpeso} that satisfies the initial condition $\rho^+(0,\rho_0)=\rho_0$. Similarly, let $\rho^-=\rho^-(\theta,\rho_1)$ be the solution of \eqref{eqpolarpeso} that satisfies the initial condition $\rho^-(0,\rho_1)=\rho_1$. From the Hypothesis~{\bf H} the functions $G^\pm(\theta, 0)$ do not vanish at the appropriated domain, then both functions $\rho_0\mapsto\rho^+(\pi,\rho_0)$ and $\rho_1\mapsto\rho^-(-\pi,\rho_1)$ are analytic. In order to compute the displacement function we start with a point $(x_0,0)\in\Sigma$, with $x_0>0$. The point $(x_0,0)$ corresponds to the points $(\rho_0,0)$ and $(\rho_1,0)$ in the weight polar coordinates, where $\rho_0=x_0^{1/p}$ and $\rho_1=x_0^{1/q}$. The half-return maps for $X^+$ and $-X^-$ (we consider $-X^-$ because to define the displacement function, we need to reverse the flow of $X^-$) are
	\[
	\Pi^+(x_0)=-(\rho^+(\pi,x_0^{1/p}))^p\quad \mbox{ and } \quad\Pi^-(x_0)=-(\rho^-(-\pi,x_0^{1/q}))^q,
	\]
	respectively. 
	As $\rho^\pm(\pm \pi,\cdot)$ are analytic, doing $x_0=w^{pq}$, we have that
	\[
	\Psi^+(w)=\Pi^+(w^{pq})=-(\rho^+(\pi,w^{q}))^p 
	\]
	and
	\[
	\Psi^-(w)=\Pi^-(w^{pq})=-(\rho^-(-\pi,w^{p}))^q
	\]
	are also analytic. 
	So the displacement function $\Psi =\Psi^+-\Psi^-$  is analytic. This finishes the proof.
\end{proof}

We say that a $\Sigma$-monodromic singular point $p$ is {\it free of limit cycles} if there exists a neighborhood of $p$ without crossing limit cycles.  

\begin{corollary}
	\label{cor:PRMA}
	With the hypothesis of Theorem \ref{fund_lemma}, we have that $p$ is free of limit cycles.
\end{corollary}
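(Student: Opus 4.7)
The plan is to exploit the analyticity furnished by Theorem \ref{fund_lemma}: once the displacement function is analytic in a suitable coordinate $w$ parameterizing $\Sigma$ near $p$, the question of existence of limit cycles accumulating at $p$ reduces to a dichotomy via the identity principle for real analytic functions of one variable.

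First I would recall that a crossing limit cycle of $X$ meeting $\Sigma$ at a point close to $p$ corresponds precisely to an isolated zero $w_0\neq 0$ of the displacement function $\Psi$ in a neighborhood of $0$. By Theorem \ref{fund_lemma}, in the coordinate $w$ obtained there the function $\Psi$ is analytic on an interval $(-\varepsilon_0,\varepsilon_0)$ around $0$, and in particular $\Psi(0)=0$.

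Next I would split into two cases using the identity principle. If $\Psi\not\equiv 0$ on any one-sided neighborhood of $0$, then the zeros of $\Psi$ form a discrete set, so there exists $0<\varepsilon\le \varepsilon_0$ with $\Psi(w)\neq 0$ for $0<|w|<\varepsilon$. Translating back to $\Sigma$ this yields a neighborhood of $p$ containing no $\Sigma$-closed orbit other than the degenerate one through $p$ itself, and in particular no crossing limit cycle. If on the contrary $\Psi\equiv 0$ on a neighborhood of $0$, then every initial condition on $\Sigma$ sufficiently close to $p$ lies on a $\Sigma$-closed orbit; hence $p$ is a center (case (a) of Definition \ref{def_Mono}), every nearby closed orbit is non-isolated, and so again there is a neighborhood of $p$ free of limit cycles. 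Combining the two cases gives the corollary, and as a by-product the center-focus alternative of Definition \ref{def_Mono} is excluded under Hypothesis \textbf{H}.

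I do not anticipate any serious obstacle: the entire statement is a one-line consequence of the identity principle once one has accepted Theorem \ref{fund_lemma}. The only subtlety worth stating carefully is the bookkeeping that turns a zero $w_0\neq 0$ of $\Psi$ into a genuine crossing $\Sigma$-closed orbit (so that isolated zeros of $\Psi$ coincide with isolated crossing periodic orbits), which is immediate from the construction of $\Psi$ as $\Pi^+-(\Pi^-)^{-1}$ together with the half-return maps in Section \ref{sec:PRM1}.
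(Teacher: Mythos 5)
Your argument is correct and is precisely the standard one the authors intend: the paper states Corollary~\ref{cor:PRMA} without proof, treating it as an immediate consequence of the analyticity of $\Psi$ established in Theorem~\ref{fund_lemma}, via the dichotomy that a real analytic function on an interval is either identically zero (center, no isolated periodic orbits) or has $w=0$ as an isolated zero (no nearby crossing limit cycles). Your write-up simply makes explicit what the paper leaves implicit, so there is nothing to correct.
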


\begin{observation}
\label{obnil}
We conclude, from Proposition \ref{prop:09}, Theorem \ref{fund_lemma} and Corollary \ref{cor:PRMA},  that if $p$ is a $\Sigma$-monodromic singular point of a piecewise analytical planar vector field $X=(X^+, X^-)$ such that it is a fold point of order $n^{\pm}$ of $X^{\pm}$ with $\pm(X^\pm)^{n^\pm}f(p)<0$ or $p$ is a singular point of $X^\pm$ having a linear part with eigenvalues $\alpha\pm\beta i$, with $\beta\neq0$, then its displacement function is analytic, and it is free of limit cycles. Now, by Proposition \ref{propcusp}, Theorem \ref{fund_lemma} and Corollary \ref{cor:PRMA}, if $p$ is nilpotent singular point of cusp or monodromic type for $X^+$ (resp. $X^-$),  such that the switching curve $\Sigma$ is transversal to the eigenspace associated to $p$, and $p$ is a fold point of order $n^{-}$ of $X^{-}$ with $-(X^-)^{n^-}f(p)<0$ ( resp. $n^{+}$ of $X^{+}$ with $(X^+)^{n^+}f(p)<0$) or $p$ is a singular point of $X^-$ (resp. $X^+$) having a linear part with eigenvalues $\alpha\pm\beta i$, with $\beta\neq0$, then its displacement function is analytic, and it is free of limit cycles. Note that if $p$ is a $\Sigma$-monodromic singular point and $p$ is a nilpotent singular point of cusp type for $X^+$ or $X^-$, then the hypothesis of transversality in Proposition \ref{propcusp} is automatically satisfied. However, for the monodromic nilpotent case, we could have the eigenspace associated with the eigenvalue zero non-transversal to $\Sigma$.   Proposition \ref{propcusp} does not cover this last case.
For the other cases where $p$ is a  $\Sigma$-monodromic singular point and it is a nilpotent singular point of a type different from the mentioned above or a linearly zero singular point of $X^+$ or $X^-$, we only obtained a positive answer to Dulac's problem in the case where $X$ satisfies Hypothesis~{\bf H}.
 
\end{observation}

From now on, we call the $\Sigma$-monodromic singular points that are nilpotent singular points of at least one of the analytical vector fields $X^\pm$ of {\it $\Sigma$-monodromic nilpotent singular points}. Our best result for the $\Sigma$-monodromic nilpotent singular points is Proposition \ref{propcusp}. However, if more restrictive hypotheses are assumed, we can find other types of $\Sigma$-monodromic nilpotent singular points, not contemplated in this proposition, which also satisfy Hypothesis~{\bf H}. For example, if we assume from the start that $\Sigma$ is the $x$ axis, $X^+$ and $X^-$ are both in nilpotent normal form \eqref{eq1}, and the origin is a  $\Sigma$-monodromic nilpotent singular point obtained by a nilpotent cusp and a monodromic nilpotent singular point. We can easily prove that in this case, the  Hypothesis~{\bf H} is satisfied following the steps from the proof of Proposition 12, using polar coordinates with suitable weights for each one of the normal forms. Following this idea, we get the next proposition that allows us to find other restrictive cases of $\Sigma$-monodromic nilpotent singular points, satisfying the Hypothesis~{\bf H}, that are nilpotent singular points having an elliptic sector and a hyperbolic sector (see the case {\bf Cusp-(HE-singular point)} from Section \ref{sec:App} for a concrete example).
	\begin{proposition}
		Let $X=(X^+,X^-)$ be a piecewise  analytical vector field with switching curve  $\Sigma=\{y=0\}$,  with $X^-$ or $X^+$ given by $Y:\mathbb R^2\rightarrow\mathbb R^2$ where
		\begin{equation}\label{HO2}
		Y:\left\{ \begin{array}{l} \dot{x}=y, \\ \dot{y}=g(x)+yh(x)+y^2B(x,y), \end{array}\right.
		\end{equation}
		 $g(x)=ax^m + o(x^m)$, $h(x)=bx^n + o(x^n)$ and $B$ analytical. Assume $a<0$, $n$ odd, $m=2n+1$ and $b^2+4a(n+1)\geq0$. If $b>0$ (resp. $b<0$) then $Y$ does not have characteristic orbit in $\Sigma^-\cup\Sigma$ (resp. $\Sigma^+\cup\Sigma$) at the origin. Moreover, if $b>0$ and $Y=X^-$ (resp. $b<0$ and $Y=X^+$) then the half-Poincar\'e return map $\Pi^-$ (resp. $\Pi^+$) is well defined and it is analytic. 
\end{proposition}

\begin{proof}
		According to \cite[Theorem 3.5, p. 116]{ArtDumLli2006} the origin is a nilpotent singular point of $Y$ having an elliptic sector and a hyperbolic sector. Using  weight polar coordinates $(x,y)=( \rho \cos \theta, \rho^{n+1} \sin\theta)$, we reduce system \eqref{HO2}  to the differential equation
		\begin{equation}
		\label{n26n}
		\frac{d\rho}{d\theta}  =  \frac{(n+1)\sin^2\theta-a\sin\theta\cos^{2n+1}\theta-b\sin^2 \theta\cos^{n}\theta}{(n+1)\sin^{2}\theta -a\cos^{2n+2}\theta -b\sin\theta\cos^{n+1}\theta}\rho +{\mathcal O}(\rho^2).
		\end{equation} 
		In the differential equation \eqref{n26n}, we observe that $G(\theta, 0)=(n+1)\sin^{2}\theta -a\cos^{2n+2}\theta -b\sin\theta\cos^{n+1}\theta$ is not null for $\theta\in\{0, \pi,2\pi\}$, and for $\theta\in (0, \pi) \cup (\pi,2\pi)$, we can write 
		\begin{equation}
		\label{n27n}
		\footnotesize
		G(\theta, 0)=-\sin^2\theta\left(a\left(\frac{\cos^{n+1}\theta}{\sin\theta}\right)^2+b\left(\frac{\cos^{n+1}\theta}{\sin\theta}\right)-(n+1)\right).
		\end{equation}
		Therefore, as $b^2+4a(n+1)\geq0$, it follows that equation $au^2+bu-(n+1)=0$ has real solutions $u_1$ and $u_2$ (equal or distinct), given by
		\[u_1=\frac{-b+\sqrt{b^2+4a(n+1)}}{2a}\quad \mbox{and} \quad u_2=\frac{-b-\sqrt{b^2+4a(n+1)}}{2a}.\] 
		The hypothesis $a<0$ implies $u_1u_2>0$. Observe that if $\theta_0\not\in\{0,\pi,2\pi\}$ and $G(\theta_0,0)=0$ then $\cos^{n+1}\theta_0 = u_i\sin\theta_0$ for $i=1$ or $i=2$. Due to the hypothesis $n$ odd we have that the case $u_1,u_2>0$ implies $0<\theta_0<\pi$, and  the case $u_1,u_2<0$ implies $\pi<\theta_0<2\pi$. The case $u_1,u_2>0$ is equivalent to $b>0$ and the case $u_1,u_2<0$ is equivalent to $b<0$.\\ $\,$ \vspace{-.3cm} \\
		\indent Let $y=cx^k + o(x^k)$, with $c\neq0$, the local expression of the characteristic orbits of $Y$, given by \eqref{HO2}, at the origin. Differentiating with respect to $t$ and using expressions of \eqref{HO2} we obtain
		\[ax^{2n+1}+bcx^{n+k}+\cdots = c^2kx^{2k-1}+\cdots\]
		If $k<n+1$ then $2k-1<2n+1$ and $2k-1<n+k$. It implies $c=0$. But it is impossible because we have $c\neq0$. For $k=n+1$ we obtain the equation
		\[a\left(\frac{1}{c}\right)^2+b\left(\frac{1}{c}\right)-(n+1)=0,\]
		that have solutions $c=1/u_1$ and $c=1/u_2$. This implies that the characteristic orbits are given by 
		\[y = \frac{1}{u_1}x^{n+1}+\mathcal O(x^{n+2}) \quad\mbox{and}\quad y = \frac{1}{u_2}x^{n+1}+\mathcal O(x^{n+2}).\]
		Again due to the hypothesis $n$ odd we have that the case $u_1,u_2>0$ (resp. $u_1,u_2<0$), equivalently $b>0$ (resp. $b<0$), implies the characteristic orbits  of $Y$ at the origin are in $\Sigma^+$ (resp. $\Sigma^+$). This completes the proof.
\end{proof}

\section{Applications}
\label{sec:App}

In this section, we study the topological structure in the neighborhood of a $\Sigma$-monodromic singular point of piecewise analytical vector fields.  More precisely, we exhibit $\Sigma$-monodromic singular points formed by singular points of $X^-$ and $X^+$ that have not yet been described in the literature. We will apply our results to analyse four cases: Cusp-Fold$_2$;  Fold$_2$-Fold$_4$;  Cusp-Degenerate; and  Elementary-Degenerate. 

\begin{proposition}
\label{methpolar}
Let $X=(X^+, X^-)$ be a piecewise analytical vector field having an isolated $\Sigma$-monodromic singular point and satisfying the Hypothesis~{\bf H}. Then the half-Poincar\'e return maps $\Pi^\pm$ are 

\begin{align}\label{piplus}
		\Pi^+(x_0)  = & -(u_1^+(\pi))^p x_0-p (u_1^+(\pi))^{p-1} u_2^+(\pi)x_0^{\frac{p+1}{p}}  \nonumber\\ 
		&-\bigg(\frac{1}{2}(p-1)p(u_1^+(\pi))^{p-2}(u_2^+(\pi))^2 \\ & +p(u_1^+(\pi))^{p-1}u_3^+(\pi) \bigg)x_0^{\frac{p+2}{p}}- \dots \nonumber 
	\end{align}
	and
	\begin{align}\label{piminus}
		\Pi^-(x_0)  =& -(u_1^-(-\pi))^q x_0-q (u_1^-(-\pi))^{q-1} u_2^-(-\pi)x_0^{\frac{q+1}{q}}\nonumber  \\ 
		& - \bigg(\frac{1}{2}(q-1)q(u_1^-(-\pi))^{q-2}(u_2^-(-\pi))^2 \\ & +q(u_1^-(-\pi))^{q-1}u_3^-(-\pi)\bigg)x_0^{\frac{q+2}{q}}- \cdots,\nonumber 
	\end{align}
	where $u_{i}^\pm(\theta)$ is the coefficient of order $i$ in the expansion of solution $\rho=\rho^\pm(\theta, \rho_0)$ (with $\rho^\pm(0, \rho_0)=\rho_0$) of equation \eqref{eqpolarpeso} in powers of $\rho_0$. Moreover, doing $x_0=w^{pq}$, we can obtain the expansion of the displacement function $\Psi$ in powers of $w$ given by $\Psi (w)=\Pi^+(w^{pq}) -\Pi^-(w^{pq})$.
\end{proposition}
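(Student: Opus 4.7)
The plan is to first establish that the solution $\rho^{\pm}(\theta,\rho_0)$ of \eqref{eqpolarpeso} admits a convergent power series expansion in $\rho_0$, then translate the half-Poincar\'e return map into the weighted polar chart, and finally expand by the binomial formula. I would begin by noting that, by Hypothesis~\textbf{H}, $F^{\pm}$ is analytic with $F^{\pm}(\theta,0)=0$, so $F^{\pm}(\theta,\rho)=\rho\,\tilde F^{\pm}(\theta,\rho)$ with $\tilde F^{\pm}$ analytic; since $G^{+}(\theta,0)\ne 0$ on $[0,\pi]$ (respectively $G^{-}(\theta,0)\ne 0$ on $[-\pi,0]$), the right-hand side of \eqref{eqpolarpeso} is analytic in $(\theta,\rho)$ on a neighbourhood of the interval $\times\{0\}$ and vanishes along $\rho=0$. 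Standard analytic dependence of ODE solutions on initial data then gives
\[
\rho^{\pm}(\theta,\rho_0)=\sum_{i\ge 1} u_i^{\pm}(\theta)\,\rho_0^{i},
\]
with $u_1^{\pm}(0)=1$ and $u_i^{\pm}(0)=0$ for $i\ge 2$ (there is no constant term because $\rho\equiv 0$ is a solution). The $u_i^{\pm}(\theta)$ are determined by the linear recursive ODEs obtained by formally differentiating \eqref{eqpolarpeso} in $\rho_0$ at $\rho_0=0$.

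Next I would express the half-return maps. A point $(x_0,0)\in\Sigma$ with $x_0>0$ corresponds, in the weighted polar coordinates $(x,y)=(\rho^{p}\cos\theta,\rho^{r}\sin\theta)$ of $\Sigma\cup\Sigma^{+}$, to $\theta=0$ and $\rho_0=x_0^{1/p}$. By Hypothesis~\textbf{H} the orbit of $X^{+}$ through this point traverses $\Sigma^{+}$ counterclockwise and returns to $\Sigma$ at $\theta=\pi$, where the $x$-coordinate equals $\rho^{p}\cos\pi=-\rho^{p}$. Hence
\[
\Pi^{+}(x_0)=-\bigl(\rho^{+}(\pi,x_0^{1/p})\bigr)^{p},
\]
and the analogous argument in $\Sigma^{-}$ (letting $\theta$ decrease from $0$ to $-\pi$) gives
\[
\Pi^{-}(x_0)=-\bigl(\rho^{-}(-\pi,x_0^{1/q})\bigr)^{q}.
\]
These are exactly the identities already used in the proof of Theorem~\ref{fund_lemma}.

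The main computation is then to substitute $\rho_0=x_0^{1/p}$ in the series for $\rho^{+}(\pi,\cdot)$, factor $u_1^{+}(\pi)\,x_0^{1/p}$, and apply the finite binomial formula $(1+z)^{p}=\sum_{k=0}^{p}\binom{p}{k}z^{k}$ to
\[
z=\frac{u_2^{+}(\pi)}{u_1^{+}(\pi)}\,x_0^{1/p}+\frac{u_3^{+}(\pi)}{u_1^{+}(\pi)}\,x_0^{2/p}+\cdots.
\]
Collecting contributions up to order $x_0^{(p+2)/p}$ from $z^{0}$, $z^{1}$, and $z^{2}$, multiplying by $-(u_1^{+}(\pi))^{p}$, and simplifying yields exactly \eqref{piplus}; swapping $p\leftrightarrow q$ and $\pi\leftrightarrow-\pi$ yields \eqref{piminus}. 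Finally, setting $x_0=w^{pq}$ clears the fractional exponents: each term $x_0^{(p+k)/p}$ becomes $w^{pq+kq}$ and each term $x_0^{(q+k)/q}$ becomes $w^{pq+kp}$, so both $\Pi^{\pm}(w^{pq})$ are genuine analytic series in $w$ and $\Psi(w)=\Pi^{+}(w^{pq})-\Pi^{-}(w^{pq})$ is their difference.

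The only real obstacle is pure bookkeeping: the coefficients at higher orders are Bell-polynomial combinations of the $u_i^{\pm}(\pm\pi)$, but since the proposition only records the first three terms explicitly the computation never goes beyond $z^{2}$ and involves no subtlety. The analytic dependence step is classical and the geometric step is forced by the choice of weighted polar chart in Hypothesis~\textbf{H}, so the whole argument reduces to a controlled power series manipulation.
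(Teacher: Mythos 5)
Your proposal is correct and takes essentially the same route as the paper: expand the solution of \eqref{eqpolarpeso} as $\rho^{\pm}(\theta,\rho_0)=\sum_{i\ge1}u_i^{\pm}(\theta)\rho_0^{i}$ via the recursive linear ODEs for the $u_i^{\pm}$ (the paper follows the classical method of \cite[p.~249]{ALGM} and writes the integrals out explicitly), then substitute $\rho_0=x_0^{1/p}$ (resp.\ $x_0^{1/q}$) into $\Pi^{\pm}(x_0)=-(\rho^{\pm}(\pm\pi,\cdot))^{p\text{ or }q}$ from Theorem~\ref{fund_lemma} and expand. Your explicit binomial bookkeeping for the coefficients of $x_0^{(p+1)/p}$ and $x_0^{(p+2)/p}$ is a step the paper leaves implicit, and it checks out.
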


\begin{proof}
	We will use the method described in \cite[p.~249]{ALGM} to get Taylor series expansion at the origin of the displacement function and so to study the Center-Focus Problem for $\Sigma$-monodromic singular points. More precisely, the right hand side of equation \eqref{eqpolarpeso} is an analytical map in the squares  $[0, \pi]\times (-\epsilon, \epsilon)$ and $[-\pi,0]\times (-\epsilon, \epsilon)$, for some $\epsilon>0$. Then we can write equation \eqref{eqpolarpeso} as 
	\begin{equation}\label{polarpesoexpan}
		\frac{d\rho}{d\theta}  = R_1^\pm(\theta) \rho+R_2^\pm(\theta) \rho^2+R_3^\pm(\theta) \rho^3+\cdots.
	\end{equation}

	Let $\rho=\rho^+(\theta,\rho_0)$ the solution of equation \eqref{polarpesoexpan}$^+$ with the initial condition $\rho^+(0, \rho_0)=\rho_0$. 
	The Taylor series of the solution $\rho^+$ on powers of $\rho_0$ in a	neighborhood of $0$ has the form 
	\begin{equation}\label{polarsolexpan}
		\rho^+(\theta,\rho_0)= u_1^+(\theta) \rho_0+u_2^+(\theta) \rho_0^2+u_3^+(\theta) \rho_0^3+\cdots,
	\end{equation}
	with $u_1^+(0)=1$ and $u_2^+(0)=u_3^+(0)=\cdots=0$.
	
	\smallskip
	
	Substituting \eqref{polarsolexpan} into \eqref{polarpesoexpan}$^+$, and comparing the coefficients of the corresponding powers of $\rho_0$ of the right and left hand sides, we obtain the following recursive differential equations for the coefficients $u_i^+(\theta)$ ($i=1, 2, 3, \cdots$)
	\begin{align}\label{recurpolar}
		(u_1^+)' (\theta) = & R_1^+(\theta)u_1^+(\theta),\nonumber\\
		(u_2^+)' (\theta) =& R_1^+(\theta)u_2^+(\theta)+R_2^+(\theta)(u_1^+(\theta))^2,\\
		(u_3^+)' (\theta)= & R_1^+(\theta)u_3^+(\theta)+2R_2^+(\theta)u_1^+(\theta)u_2^+(\theta)+R_3^+(\theta)(u_1^+(\theta))^3,\nonumber\\
		\vdots \hspace{0.1cm}& \nonumber
	\end{align}
	
	Using these initial conditions $u_1^+(0)=1$ and $u_2^+(0)=u_3^+(0)=\cdots=0$ and successively integrating equations \eqref{recurpolar} as linear differential equations, we obtain
	\begin{equation}\label{solpolarpeso}
		\begin{array}{lcl}
			u_1^+(\theta) & = & e^{\int_0^\theta R_1^+(\theta)d\theta},\\
			u_2^+(\theta)  & = & e^{\int_0^\theta R_1^+(\theta)d\theta}\int_0^\theta R_2^+(\theta)u_1^+(\theta) d\theta,\\
			u_3^+ (\theta) & = & e^{\int_0^\theta R_1^+(\theta)d\theta}\int_0^\theta\left( 2R_2^+(\theta)u_2^+(\theta)+R_3^+(\theta)(u_1^+(\theta))^2 \right)d\theta,\\
			& \vdots & 
		\end{array}
	\end{equation}
	Now, following the proof of Lemma \ref{fund_lemma}, the half-Poincar\'e return map for $X^+$ is $\Pi^+(x_0)  = -(\rho^+(\pi,x_0^{1/p}))^p$, i.e., $\Pi^+$ is given by \eqref{piplus}.

	Analogously, we obtain that the half-Poincar\'e return map $\Pi^-(x_0)$  for $-X^-$ is given  by \eqref{piminus}. 
	\end{proof}
	
	In what follows we consider piecewise analytical vector fields  $X=(X^+,X^-)$ having the origin as a $\Sigma$-monodromic singular point and the switching curve  $\Sigma=\{y=0\}$. Now, we will use Proposition~\ref{methpolar} considering some classes of piecewise analytical vector fields.  The figures that illustrate the cases below were made with the help of the software P5  \cite{P5}. 
\medskip

\noindent\textbf{Cusp-Fold$_2$ case:}
The classical normal form of cusp singular point is given by $(\dot{x},\dot{y})=(y,ax^m+bx^ny)$, with $m$ even and $m<2n+1$. To simplificate the calculations, we will consider the case $m=2,\ n=1$ and $a=1$. In order to put the characteristic orbits of the cusp singular point tangent to the $y$-axis we perform the change of variable $\overline{x}=y$,  $\overline{y}=-x$ and $\overline{t}=-t$. We obtain 
\begin{equation}\label{cusp}
X^+:\left\{ \begin{array}{l} \dot{x}=-y^2+bxy, \\ \dot{y}=x. \end{array}\right.
\end{equation}
In the half-plane $\Sigma^-$ we consider the vector field $X^-$ with a fold at origin
\begin{equation}\label{fold}X^-:\left\{ \begin{array}{l} \dot{x}=1, \\ \dot{y}=x. \end{array}\right.
\end{equation}
Note that $X^-$ has a fold point of order $2$ at the origin, because $X^-(0,0)\neq (0,0)$,  $X^-f(0, 0)=0$ and $(X^-)^2f(0,0)=1$. Therefore, by Theorem \ref{the:01}~(ii), the origin is a $\Sigma$-monodromic singular point. Note that the trajectories of $X$ are oriented counterclockwise sense. See Figure~\ref{cusp_fold2}.
\begin{figure}[h]
	\begin{center}		
		\begin{overpic}[width=0.80\textwidth]{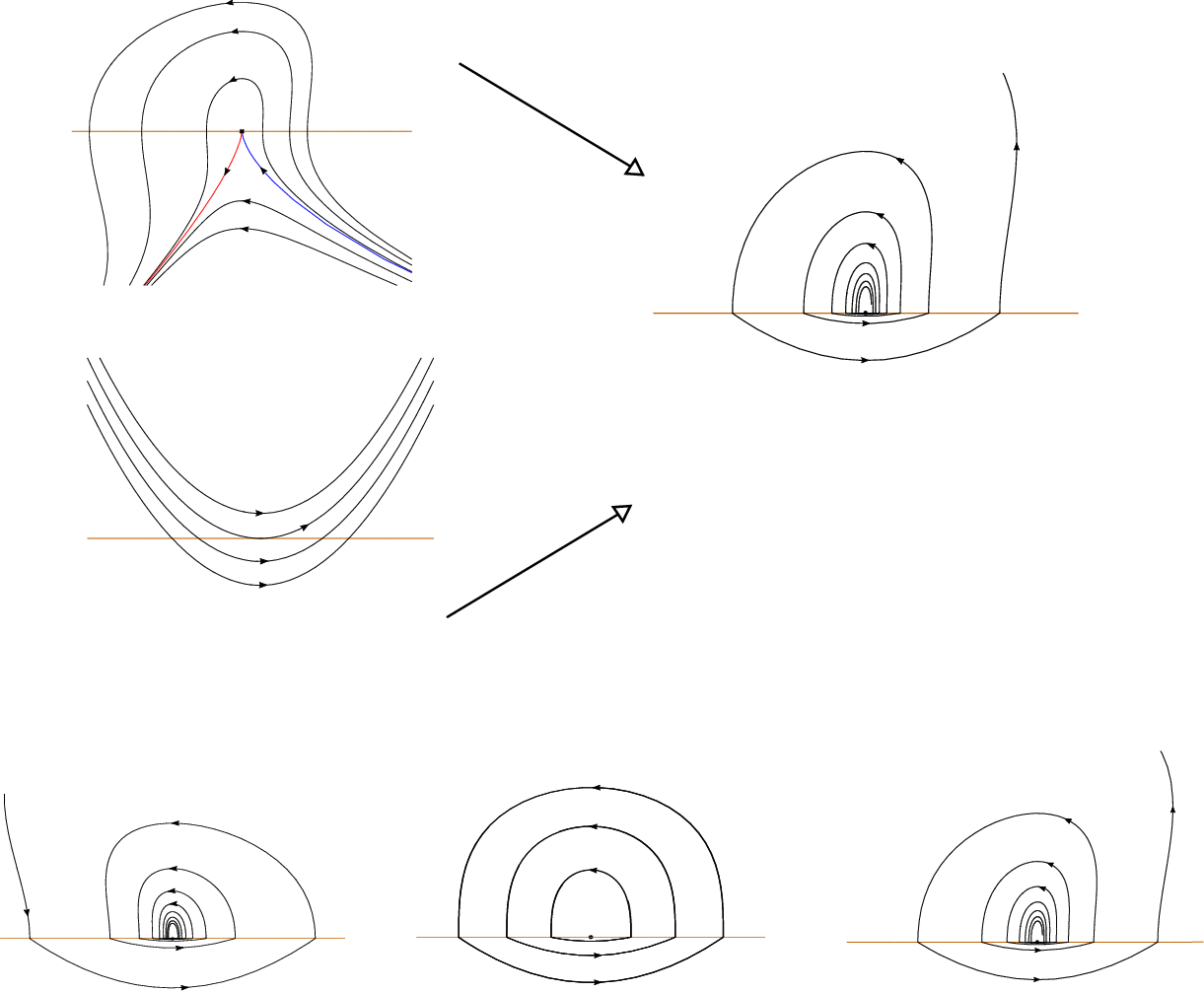}
			\put(20,54) {$(i)$}
			\put(20,28) {$(ii)$}
			\put(71,45){$(iii)$}
			\put(12,-5) {$(iv)$}
			\put(47,-5) {$(v)$}
			\put(85,-5){$(vi)$}
			\put(35,71){$\Sigma$}
			\put(36.5,37){$\Sigma$}
			\put(90.2,55.5){$\Sigma$}
			\put(30,4){$\Sigma$}
			\put(65,4){$\Sigma$}
			\put(101,3.5){$\Sigma$}
		\end{overpic}
	\end{center}
	\vspace{0.7cm}
	\caption{Local phase portraits at the origin. $(i)$ A cusp for  $X^+$ with $b>0$.  $(ii)$ A fold$_2$ for $X^-$. $(iii)$ The Cusp-Fold$_2$ case for $X$ with $b>0$. $(iv)$ A stable $\Sigma$-focus for $X$ with $b<0$. $(v)$ A $\Sigma$-center for $X$ with $b=0$. $(vi)$ An unstable  $\Sigma$-focus for $X$ with $b>0$.}
	\label{cusp_fold2}
\end{figure}

We must compute the half-Poincar\'e return map $\Pi^-:V\subset \Sigma \rightarrow W\subset \Sigma$ associated with vector field $-X^-$ at $(x_0,0)\in V$, with $x_0>0$. The solution $\alpha=\alpha(t)$ of \eqref{fold} that satisfy the initial condition $\alpha(0)=(x_0,0)$ is $\alpha(t)=(t+x_0,t^2/2+x_0t)$. The values of $t$ such that the second component of $\alpha(t)$ is equal to zero is $t=0$ or $t=-2x_0$. So, $\Pi^-(x_0)=-x_0$, because  $\alpha(-2x_0)=(-x_0,0)$. 

The next step is to compute the half-Poincar\'e return map $\Pi^+:V\rightarrow W$. Following the proof of Proposition~\ref{methpolar} we need to determine the weights $p$ and $q$. For this, we use the Newton polygon (see  \cite[p.~104]{ArtDumLli2006}) associated with system \eqref{cusp}. Here, the Newton polygon is defined by the vertices  $(0,1),\ (-1,2),\ (1,-1)$ and the straight line connecting  $(-1,2)$ and $(1,-1)$  is $3x+2y=1$. So, we perform the change of coordinates $x=\rho^3\cos \theta$ and $y=\rho^2\sin \theta$. In these coordinates $(\rho,\theta)$ system \eqref{cusp} writes
\begin{equation}\label{cuspp}
X^+:\left\{ \begin{array}{l} \dot{\rho}=f_2^+(\theta)\rho^2+f_3^+(\theta)\rho^3, \\ \dot{\theta}=g_1^+(\theta)\rho + g_2^+(\theta)\rho^2. \end{array}\right.
\end{equation}
where 
\begin{align*}
f_2^+(\theta) & =\frac{\cos \theta\sin \theta(1-\sin \theta)}{3\cos^2\theta+2\sin^2\theta}, &
f _3^+(\theta)  =   \frac{b\cos^2 \theta\sin \theta}{3\cos^2\theta+2\sin^2\theta}, \\
g_1^+(\theta) & = \frac{2\sin^3 \theta+3\cos^2 \theta}{3\cos^2\theta+2\sin^2\theta}, &
g_2^+(\theta)  =   \frac{-2b\cos \theta\sin^2 \theta}{3\cos^2\theta+2\sin^2\theta}.
\end{align*}

We observe that $X$ satisfies the Hypothesis~{\bf H}, because $g_1^+(\theta)\neq0$ for all $\theta\in [0,\pi]$. Considering $\theta$ as new independent variable, system \eqref{cuspp} is equivalent to equation
\begin{equation}\label{cusp3}
\frac{d\rho}{d\theta}=R_1^+(\theta)\rho+R^+_2(\theta)\rho^2 + {\mathcal O}(\rho^3).
\end{equation}
where
\[
R^+_1(\theta)=\frac{f_2^+(\theta)}{g_1^+(\theta)} \mbox{ and } R_2^+(\theta)=\frac{f_3^+(\theta)g_1^+(\theta)-f_2^+(\theta)g_2^+(\theta)}{g_1^+(\theta)^2}.
\]

According to the proof of Proposition \ref{methpolar}, the analytical solution $\rho=\rho(\theta,\rho_0)$ of the equation \eqref{cusp3} satisfying the initial condition $\rho(0,\rho_0)=\rho_0$ is given by \eqref{polarsolexpan}. By \eqref{solpolarpeso} it follows that
\begin{align*}
u_1(\theta) & =\frac{3^{1/6}}{m(\theta)^{1/6}},  \\
u_2(\theta) & = \frac{b}{m(\theta)^{1/6}}\int_0^\theta \frac{3^{1/6} (11\sin(3x)+10\sin(x)+\sin(5x))}{16\,m(x)^{13/6}} dx,
\end{align*}
with $m(\theta)=(3-2\sin \theta)\cos^2 \theta+2\sin \theta$. From these computations we have $u_1(\pi)=1$ and $u_2(\pi)=bk_0$ with $k_0>0$. So we have  
\[
\rho(\pi,\rho_0)=\rho_0+bk_0\rho_0^2 + {\mathcal O}(\rho_0^3).
\] 

Hence, by \eqref{piplus}, the half-Poincar\'e return map with respect to $\Sigma^+$ is
\[
\Pi^+(x_0)=-x_0-3bk_0x_0^{4/3}+{\mathcal O}(x_0^{5/3}),
\]
and the displacement function is
\[
\Psi(x_0)=\Pi^+(x_0)-\Pi^-(x_0)=-3bk_0x_0^{4/3}+{\mathcal O}(x_0^{5/3}).
\]

For $x_0$ small enough and $b<0$ we have $\Psi(x_0)>0$. It implies $\Pi^+(x_0) >\Pi^-(x_0)$ and the origin is a stable $\Sigma$-focus. If $b>0$ then the origin is an unstable $\Sigma$-focus. To finish this analysis, remains to show that for $b=0$ we have a $\Sigma$-center. This is easy because system \eqref{cusp} with $b=0$ is Hamiltonian with energy $H(x,y)= x^2/2+y^3/3$. The points $(x_0,0)$ and $(-x_0,0)$ are in the same level set of $H$. Clearly it implies that $\Pi^+(x_0)=-x_0$. So, the origin has a neighborhood free of limit cycles.

\medskip

\noindent\textbf{Cusp-(HE-singular point):}
Consider a $\Sigma$-monodromic singular point formed by a cusp singular point for $X^+$ and a degenerate singular point with an elliptic sector and a hyperbolic sector for $X^-$. On the half-plane $\Sigma^+$,  $X^+$ is associated with system \eqref{cusp} 
and in the half-plane $\Sigma^-$ we consider
\begin{equation}\label{HE}X^-:\left\{ \begin{array}{l} \dot{x}=-y, \\ \dot{y}=x^3-4xy. \end{array}\right.
\end{equation}

The vector field~\eqref{cusp} that has at origin a cusp singular point of $X^+$ and the characteristic orbits of the cusp are tangent to $y$ axis and belongs to half-plane $\Sigma^-$. 

For $X^-$ the origin as a nilpotent singular point with one elliptic sector and one hyperbolic sector, called {\it HE-singular point}. See Figure~\ref{cusp-degen} 
\begin{figure}[h]
	\begin{center}		
		\begin{overpic}[width=0.80\textwidth]{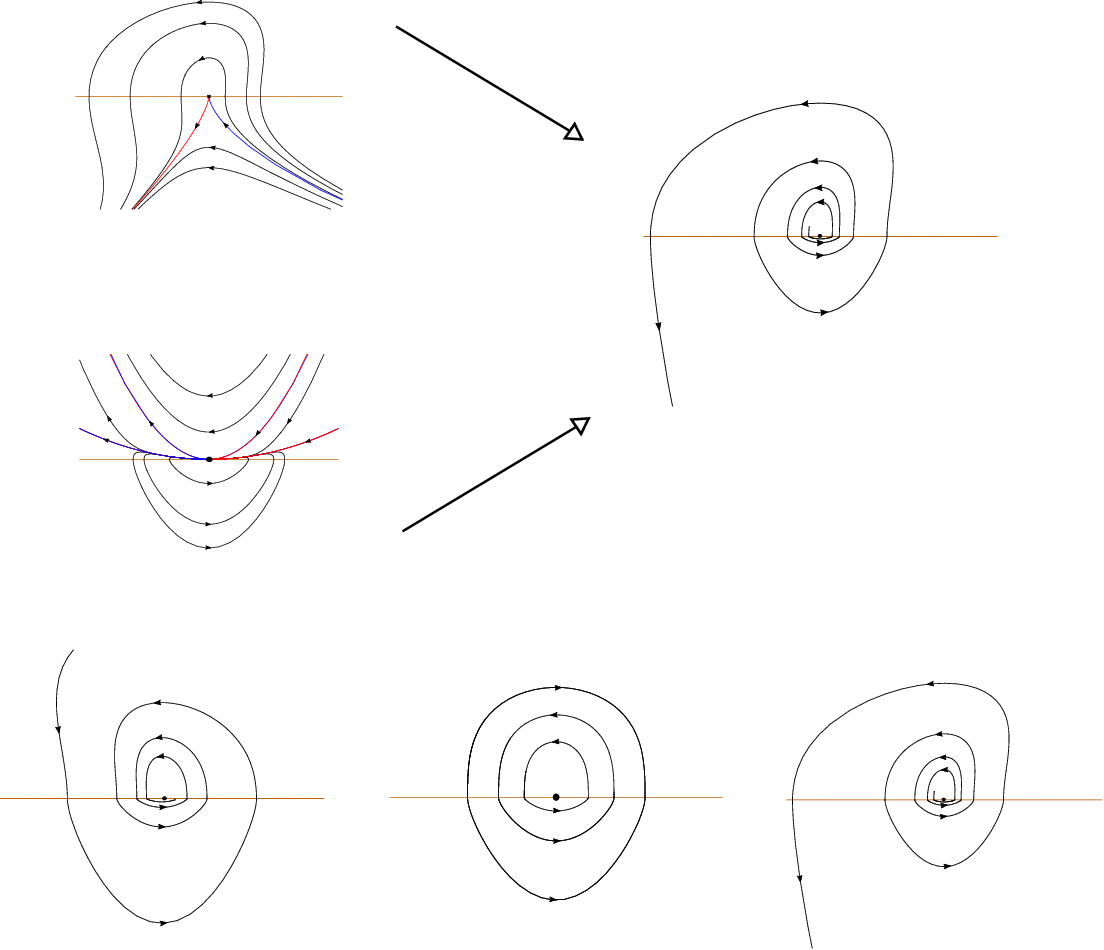}
			\put(18,61) {$(i)$}
			\put(17,30.5) {$(ii)$}
			\put(71,45){$(iii)$}
			\put(12,-5) {$(iv)$}
			\put(48,-5) {$(v)$}
			\put(83,-5){$(vi)$}
			\put(33,76){$\Sigma$}
			\put(33,43){$\Sigma$}
			\put(92,63){$\Sigma$}
			\put(31,12){$\Sigma$}
			\put(66.5,12){$\Sigma$}
			\put(101,12){$\Sigma$}
		\end{overpic}
	\end{center}
	\vspace{0.7cm}
	\caption{Local phase portraits at the origin. $(i)$ A cusp for $X^+$,with $b>0$.   $(ii)$ A HE-singular point for $X^-$.  $(iii)$  The Cusp-(HE-singular point) case for $X$ with $b>0$. $(iv)$ A stable $\Sigma$-focus for $X$ with $b<0$.  $(v)$ A $\Sigma$-center for $X$ with $b=0$. $(vi)$ An unstable $\Sigma$-focus for $X$ with $b>0$.}
	\label{cusp-degen}
\end{figure}

 The parabolas  $\mathcal{P_\pm}: y=(2\pm\sqrt{2})^{-1}x^2$ are invariant by the flow of $X^-$, because $\langle X^-, \nabla h_\pm\rangle=h_\pm k$ with $h_\pm(x,y)=x^2+(-2\pm \sqrt{2})y$ and $k_\pm(x,y)=(-2\pm\sqrt{2})x$ (i.e. $h_\pm=0$ is a {\it algebraic invariant curve} of $X^-$, see  \cite[p.~215]{ArtDumLli2006}). Moreover $X^-$ is transversal to $x$ axis except in the origin. Therefore the separatrices of the hyperbolic sector of  the HE-singular point of $X^-$ belongs to half-plane $\Sigma^+$.

It follows from Theorem \ref{the:01}, that the origin is a $\Sigma$-monodromic singular point of $X$. Note that the trajectories of $X$ are oriented counterclockwise sense. As system \eqref{HE} is invariant by the change of variables $(x, y, t)\mapsto (-x, y, -t)$, it is a reversible system and so  $\Pi^-(x_0)=-x_0$, for $x_0>0$. So as in the Cusp-Fold case, the origin is a stable $\Sigma$-focus if $b<0$,  an unstable  $\Sigma$-focus if $b>0$,  and a $\Sigma$-center if $b=0$. This analysis proves that the origin has a neighborhood free of limit cycles.

\smallskip

\noindent\textbf{Fold$_2$-Fold$_4$:}
Consider a $\Sigma$-monodromic singular point where both $X^+$ and $X^-$ have fold points at the origin. In the half-plane $\Sigma^+$ we have
\begin{equation}\label{2fold}
 X^+:\left\{ \begin{array}{l} \dot{x}=a x-1, \\ \dot{y}=x+by, \end{array}\right.
\end{equation}
and in the half-plane $\Sigma^-$ we have
\begin{equation}\label{4fold}X^-:\left\{ \begin{array}{l} \dot{x}=y+1, \\ \dot{y}=x^3+cx^2y. \end{array}\right.
\end{equation}

As $X^+(0, 0)\neq (0, 0)$, $X^+f(0, 0)=0$ and $(X^+)^2f(0,0)=-1$, the origin is a fold point of order $2$ of $X^+$, called fold$_2$. The origin is a fold point of order $4$ of $X^-$ (called fold$_4$), provided that $X^-(0, 0) \neq (0, 0)$, $X^-f(0,0)=(X^-)^2f(0,0)=(X^+)^3f(0,0)=0$ and $(X^+)^4f(0,0)=6$. Therefore, by Theorem \ref{the:01}, the origin is a $\Sigma$-monodromic singular point. Now, by Proposition \ref{lemaFold} and Theorem \ref{fund_lemma}, the Poincar\'e return map is analytic. Note that the trajectories of $X$ are oriented counterclockwise. See Figure~\ref{fold2-fold4}.
\begin{figure}[h]
	\begin{center}		
		\begin{overpic}[width=0.80\textwidth]{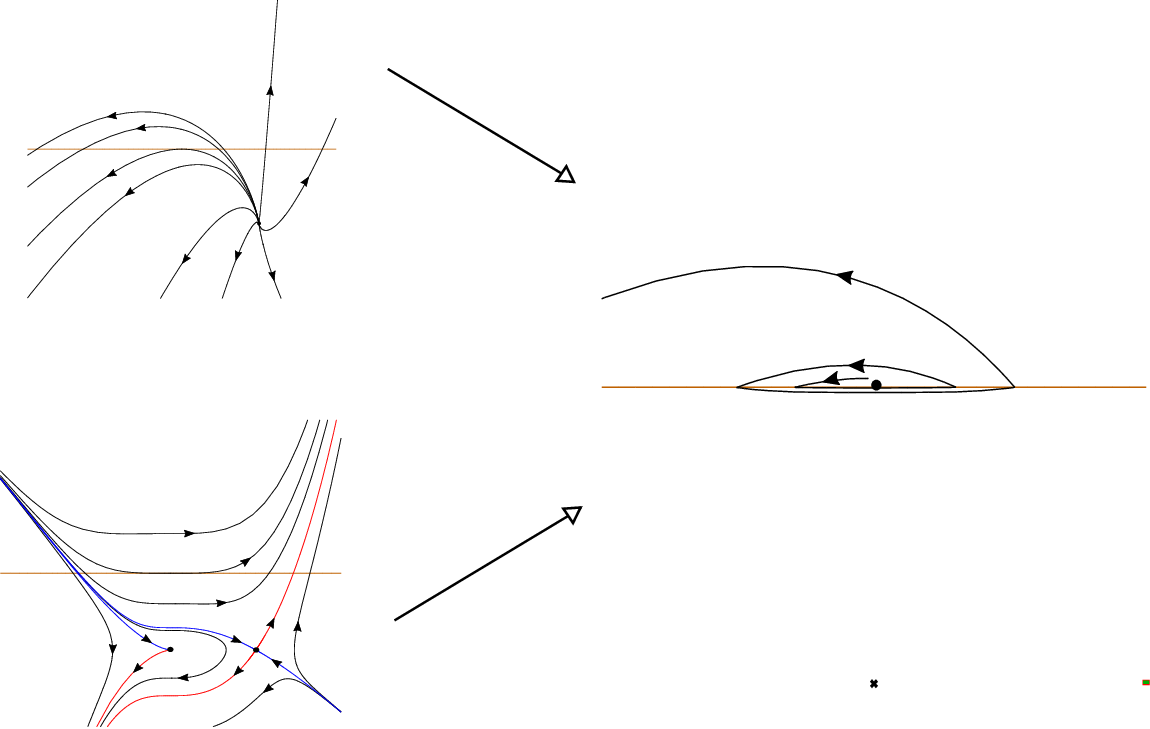}
			\put(16,32) {$(i)$}
			\put(15,-5) {$(ii)$}
			\put(72,22){$(iii)$}
			\put(32,49){$\Sigma$}
			\put(32,12){$\Sigma$}
			\put(101.5,28.5){$\Sigma$}
		\end{overpic}
	\end{center}
	\vspace{0.7cm}
	\caption{Local phase portraits at the origin. $(i)$ A fold$_2$ for $X^+$ with $a=b=1$.   $(ii)$  A fold$_4$ for $X^-$ with $c=1$.  $(iii)$ The  
	Fold$_2$-Fold$_4$ case for $X$, with $a=b=c=1$.}
	\label{fold2-fold4}
\end{figure}  

Doing the change of variables $(x,y)=(\rho\cos\theta,\rho^2\sin\theta)$ in $\Sigma\cup\Sigma^+$ and $(x,y)=(\rho\cos\theta,\rho^4\sin\theta)$ in $\Sigma\cup\Sigma^-$ the differential systems associated with vector fields $X^+$ and $X^-$ are equivalent to the differential equations
\begin{equation}
\label{eqpolar2fold}
\begin{array}{lcl}
\dfrac{d\rho}{d\theta} &=&\dfrac{f_1^+(\theta) \rho+f_2^+(\theta)\rho^2}{g_0^+ (\theta)+g_1^+ (\theta) \rho} \\ \\
&=&\dfrac{f_1^+(\theta)}{g_0^+(\theta)}\rho+\dfrac{f_2^+(\theta)g_0^+(\theta)-f_1^+(\theta)g_1^+(\theta)}{g_0^+(\theta)^2}\rho^2\\  \\& & +\dfrac{g_1^+(\theta)(f_1^+(\theta)g_1^+(\theta)-f_2^+(\theta)g_0^+(\theta))}{g_0^+(\theta)^3}\rho^3+{\mathcal O}(\rho^{4})
\end{array}
\end{equation}
and
\begin{equation}
\label{eqpolar4fold}
\begin{array}{lcl}
\dfrac{d\rho}{d\theta} &=&\dfrac{f_1^-(\theta) \rho+f_4^-(\theta)\rho^4+f_5^-(\theta)\rho^5}{g_0^- (\theta)+g_3^- (\theta) \rho^3+g_4^- (\theta) \rho^4} \\ \\
&=&\dfrac{f_1^-(\theta)}{g_0^-(\theta)}\rho+\dfrac{f_4^-(\theta)g_0^-(\theta)-f_1^-(\theta)g_3^-(\theta)}{g_0^-(\theta)^2}\rho^4+{\mathcal O}(\rho^{5}), 
\end{array}
\end{equation}
respectively, where $f_1^+(\theta)=\cos\theta(\sin\theta -1)$, $f_2^+(\theta)=(a-b)\cos^2\theta+b$, $g_0^+(\theta)=\cos^2\theta+2\sin\theta$, $g_1^+(\theta)=\cos\theta\sin\theta(b -2a)$, $f_1^-(\theta)=-\cos\theta(1+\sin\theta\cos^2\theta)$, $f_4^-(\theta)=-c\cos^2\theta\sin^2\theta$, $f_5^-(\theta)=-\cos\theta\sin\theta$, $g_0^-(\theta)=\cos^4\theta-4\sin\theta$, $g_3^-(\theta)=c\sin\theta\cos^3\theta$ and $g_4^-(\theta)=-4\sin^2\theta$.

Let  $\rho^\pm(\theta,\rho_0)$ be the analytical solution  of equations \eqref{eqpolar2fold} and \eqref{eqpolar4fold} satisfying the initial condition $\rho^\pm(0,\rho_0)=\rho_0$ with $\rho_0> 0$. Using the method described in the proof of Proposition \ref{methpolar}, we can obtain the series expansions of $\rho^\pm(\theta,\rho_0)$ in powers of $\rho_0$. Now, as $p=q=1$ in \eqref{piplus} and \eqref{piminus}, it follows that $x_0=\rho_0$ and  the displacement function is
\begin{align*}
\Psi(\rho_0)  =& \rho^+(\pi, \rho_0)-\rho^-(-\pi, \rho_0)= \dfrac{2(a+b)}{3}\rho_0^2+\left(\dfrac{4(a+b)^2}{9}\right)\rho_0^3\\
& +\left(u_4^+(\pi)-c\int_0^{-\pi}\frac{\sin\theta\cos^2\theta(3\cos2\theta-4)}{(\cos^4\theta-4\sin\theta)^{\frac{11}{4}}}d\theta\right)\rho_0^4+ {\mathcal O}(\rho_0^{5}).
\end{align*}

{For} $a+b\neq 0$, we were not able to compute the expression of $u_4^+(\pi)$, but it is not necessary to study the stability of the origin in this case. Now, for $a+b=0$, we have  $u_4^+(\pi)=0$. Hence, for $\rho_0$ small enough and $a+b<0$ we have $\Psi(\rho_0)<0$, i.e. $\rho^+(\pi, \rho_0) < \rho^-(-\pi, \rho_0)$, and so the origin is a stable $\Sigma$-focus. On the other hand, if $a+b>0$ then the origin is an unstable  $\Sigma$-focus. If $a+b=0$ and $c>0$, as 
\[\int_0^{-\pi}\frac{\sin\theta\cos^2\theta(3\cos2\theta-4)}{(\cos^4\theta-4\sin\theta)^{\frac{11}{4}}}d\theta> 0,\]
we get that the origin is a stable $\Sigma$-focus and if $c<0$ it is an unstable  $\Sigma$-focus. In order to finish the analysis of this example remains to show that for $b=-a$ and $c=0$ we have a $\Sigma$-center. This follows from the differential systems \eqref{2fold} with $b=-a$ and \eqref{4fold} with $c=0$, because they are Hamiltonian with energies $H^+(x,y)= x^2/2-axy+y$ and $H^-(x,y)= x^4/4-y^2/2-y$, respectively. Note that the points $(x_0,0)$ and $(-x_0,0)$ are in the same level set of $H^\pm$. Clearly it implies that $\Psi(\rho_0)\equiv 0$.

\medskip

\noindent\textbf{Elementary-Degenerate:}
Consider a $\Sigma$-monodromic singular point where the origin is an elementary singular point for $X^+$ and it is a degenerate one for $X^-$. More precisely, on the half-plane $\Sigma^+$,  $X^+$ is associated with the differential system
\begin{equation}\label{E}
X^+:\left\{ \begin{array}{l} \dot{x}=a x-by+c x^2, \\ \dot{y}=b x+ay, \end{array}\right.
\end{equation}
with $b>0$, and on the half-plane $\Sigma^-$ we have
\begin{equation}\label{D}
X^-:\left\{ \begin{array}{l} \dot{x}=-y^3, \\ \dot{y}=x^3+d x^4 y. \end{array}\right.
\end{equation}

Note that the origin is a monodromic singular point of $X^+$. It follows from  Propositions 5 and 10 of \cite{GGM}, that the origin is also a monodromic singular point of $X^-$\!, because the origin has not characteristics directions. By Theorem \ref{the:01}~(iii), the origin is a $\Sigma$-monodromic singular point of $X$, and the trajectories of $X$ are oriented counterclockwise sense. See Figure~\ref{elem-degen}.
\begin{figure}[h]
	\begin{center}		
		\begin{overpic}[width=0.80\textwidth]{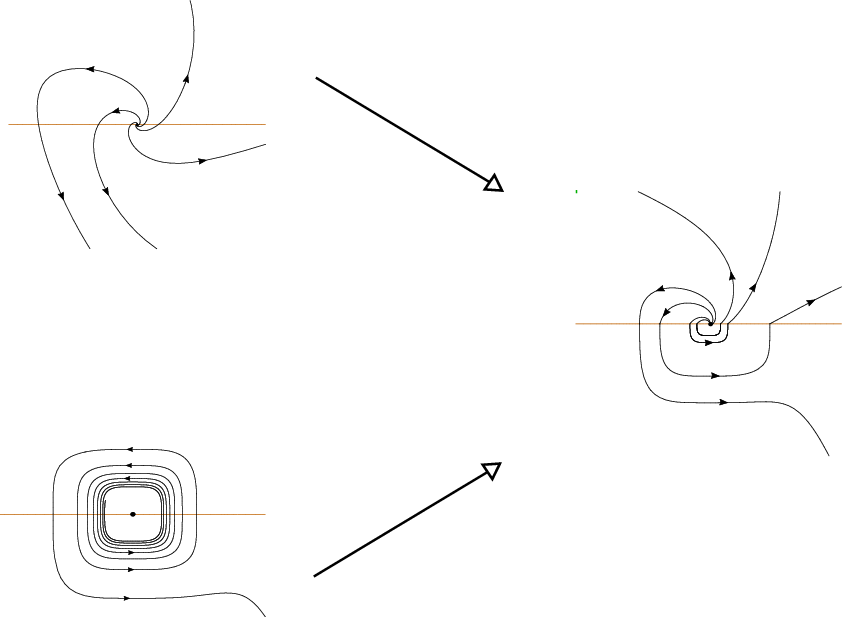}
			\put(15,37) {$(i)$}
			\put(13,-5) {$(ii)$}
			\put(82,15){$(iii)$}
			\put(33,57){$\Sigma$}
			\put(33,11){$\Sigma$}
			\put(101,33.5){$\Sigma$}
		\end{overpic}
	\end{center}
	\vspace{0.7cm}
	\caption{Local phase portraits at the origin. $(i)$ An unstable focus for $X^+$ with $a=b=c=1$.  $(ii)$ An unstable weak focus for $X^-$ with $d=1$.  $(iii)$ The Elementary-Degenerate case for $X$ with $a=b=c=d=1$.}
	\label{elem-degen}
\end{figure}
 
 Doing the change of variables $(x,y)=(\rho\cos\theta,\rho\sin\theta)$ in both  half-planes $\Sigma\cup\Sigma^+$ and $\Sigma\cup\Sigma^-$ the differential systems associated with vector fields $X^+$ and $X^-$ are
\begin{equation}
\label{eqEpolar}
\begin{array}{lcl}
\dfrac{d\rho}{d\theta} &=&\dfrac{f_1^+(\theta) \rho+f_2^+(\theta)\rho^2}{g_0^+ (\theta)+g_1^+ (\theta) \rho}
\\ \\
&=&\dfrac{f_1^+(\theta)}{g_0^+(\theta)}\rho+\dfrac{f_2^+(\theta)g_0^+(\theta)-f_1^+(\theta)g_1^+(\theta)}{g_0^+(\theta)^2}\rho^2\\  \\& & +\dfrac{g_1^+(\theta)(f_1^+(\theta)g_1^+(\theta)-f_2^+(\theta)g_0^+(\theta))}{g_0^+(\theta)^3}\rho^3+{\mathcal O}(\rho^{4})
\end{array}
\end{equation}
and
\begin{equation}
\label{eqDpolar}
\begin{array}{lcl}
\dfrac{d\rho}{d\theta} &=&\dfrac{f_1^-(\theta) \rho+f_3^-(\theta)\rho^3}{g_0^- (\theta)+g_2^- (\theta) \rho^2}
\\ \\
&=&\dfrac{f_1^-(\theta)}{g_0^-(\theta)}\rho+\dfrac{f_3^-(\theta)g_0^-(\theta)-f_1^-(\theta)g_2^-(\theta)}{g_0^-(\theta)^2}\rho^3+{\mathcal O}(\rho^{5}), 
\end{array}
\end{equation}
respectively, where $f_1^+(\theta)=a$, $f_2^+(\theta)=c\cos^3\theta$, $g_0^+(\theta)=b$, $g_1^+(\theta)=-c\cos^2\theta\sin\theta$, $f_1^-(\theta)=\cos\theta\sin\theta(2\cos^2\theta-1)$, $f_3^-(\theta)=d\cos^4\theta\sin^2\theta$, $g_0^-(\theta)=\cos^4\theta+\sin^4\theta$ and $g_2^-(\theta)=d\sin\theta\cos^5\theta$.

The piecewise vector field $X$ satisfies the Hypothesis~{\bf H}. Let  $\rho^\pm(\theta,\rho_0)$ be the analytical solution  of equations \eqref{eqEpolar} and \eqref{eqDpolar} satisfying the initial condition $\rho^\pm(0,\rho_0)=\rho_0$ with $\rho_0> 0$. Following the proof of Proposition~\ref{methpolar}, we obtain the series expansions of $\rho^\pm(\theta,\rho_0)$ in powers of $\rho_0$. Considering $p=q=1$ in \eqref{piplus} and \eqref{piminus} we get $x_0=\rho_0$, and  the displacement function is

\begin{align*}
\Psi(\rho_0)  =&\rho^+(\pi, \rho_0)-\rho^-(-\pi, \rho_0)= \left(\normalfont{\mbox{e}}^{a \pi/b}-1\right)\rho_0 \\& -\dfrac{4ab^2c \normalfont{\mbox{ e}}^{a \pi/b}\left(\normalfont{\mbox{e}}^{a \pi/b}+1\right)}{(a^2+9b^2)(a^2+b^2)}\rho_0^2\\
& +\left(u_3^+(\pi)-\dfrac{d}{\sqrt{2}}\int_0^{-\pi}\frac{(\cos4\theta-1)^2}{(\cos4\theta+3)^2\sqrt{2\cos4\theta+6}}d\theta\right)\rho_0^3\\
&+ {\mathcal O}(\rho_0^{4}).
\end{align*}
We omitted the expression of $u_3^+(\pi)$ for the sake of simplicity and we have that $u_3^+(\pi)=0$ if $a=0$. Thus, for $\rho_0>0$ small enough and $a<0$ we have $\Psi(\rho_0)<0$, i.e. $\rho^+(\pi, \rho_0) < \rho^-(\pi, \rho_0)$, and so the origin is a stable $\Sigma$-focus. On the other hand, if $a>0$ then the origin is an unstable  $\Sigma$-focus.

 If $a=0$ and $d>0$, as \[
\int_0^{-\pi}\frac{(\cos4\theta-1)^2}{(\cos4\theta+3)^2\sqrt{2\cos4\theta+6}}d\theta> 0,\]
 the origin is a stable $\Sigma$-focus and if $d<0$ it is an unstable one. To conclude our analysis, it remains to show that for $a=0$ and $d=0$ we have a $\Sigma$-center. This is true because the differential system \eqref{E} with $a=0$ is invariant by the change of variable $(x, y, t)\mapsto (-x, y, -t)$, i.e., \eqref{E} is reversible, and \eqref{D} with $d=0$ is Hamiltonian with energy $H^+(x,y)= x^4/4+y^4/4$. Note that the points $(x_0,0)$ and $(-x_0,0)$ are in the same level set of $H^+$. This implies that $\Psi(\rho_0)\equiv 0$ and finally we conclude that the origin has a neighborhood free of limit cycles.

\medskip

\section{Declaration of competing interest}

The authors declare that they have no known competing financial interests or personal
relationships that could have appeared to influence the work reported in this paper

\medskip

\section{Acknowledgments}

The first and the second authors are partially supported by S\~ao Paulo Research Foundation (FAPESP) grant 19/10269-3. The second author also is partially supported by S\~ao Paulo Research Foundation (FAPESP) grant 18/19726-5.
The third author is partially supported by PRONEX/CNPq/FAPEG, by CNPq
grant numbers 475623/2013-4 and 306615/2012-6. All authors are partially supported by CAPES
grant  PROCAD 88881.068462/2014-01.

\addcontentsline{toc}{chapter}{Bibliografia}

\end{document}